\documentclass[a4paper,12pt]{amsart}
\usepackage{macros}

\title{Fourier--Mukai transforms for non-commutative complex tori}
\author[N.~Okuda]{Nobuki Okuda}
\address{
Graduate School of Mathematical Sciences,
The University of Tokyo,
3-8-1 Komaba,
Meguro-ku,
Tokyo,
153-8914,
Japan.}
\email{hiokuc8h18@gmail.com}
\date{}
\begin{document}

\begin{abstract}
Let $X$ be a complex torus of dimension $g$
and $\hat{X}$ be the dual torus.
For any $g(g-1)/2$-tuple $\lambda$
of complex numbers of absolute value $1$,
we define a non-commutative complex torus
$\sfX_\lambda$
as a sheaf of algebras on a real torus of dimension $g$.
We prove that
if all components of $\lambda$ are roots of unity,
then the category of coherent sheaves on $\sfX_\lambda$ is abelian
and derived-equivalent to the category of coherent sheaves
on $\hat{X}$
twisted by an element of the Brauer group of $\hat{X}$
determined by $\lambda$.
\end{abstract}

\maketitle{}

\section{Introduction}

Let $X$ and $\hat{X}$ be smooth projective varieties.
An integral functor
\begin{align}
\Phi \colon D^b(X) \to D^b(\hat{X})
\end{align}
between derived categories of coherent sheaves
is said to be a \emph{Fourier--Mukai functor}
if it is an equivalence.
A Fourier--Mukai functor $\Phi$
induces an isomorphism
\begin{align}
\HH(\Phi) \colon \HH^\bullet(X) \to \HH^\bullet(\hat{X})
\end{align}
of Hochschild cohomologies.
For any $u \in \HH^2(X),$
Toda
\cite{MR2477894}
gave a $\mathbb{C}[\varepsilon]/(\varepsilon^2)$-linear category
$D^b(X,u)$
of first order deformations of $X$ along $u$
and an equivalence
\begin{align}
\Phi^\dagger \colon D^b(X,u) \to D^b(\hat{X}, \HH(\Phi)(u))
\end{align}
extending $\Phi$.

The integral functor $\FM$ with the Poincaré line bundle as the integral kernel is the first example of a Fourier--Mukai functor 
given by
Mukai
\cite{MR607081}
for an abelian variety
$X$
and the dual abelian variety $\hat{X}.$
Under the Hochschild--Kostant--Rosenberg isomorphism
\begin{align}
\HH^2(X)
\cong H^0(\wedge^2 T_X)
\oplus
H^1(T_X)
\oplus 
H^2(\cO_X),
\end{align}
the induced map $\HH(\FM)$
sends 
$
H^0(\wedge^2 T_X)
$
to
$
H^2(\cO_{\hat{X}}). 
$
This suggests that non-commutative deformations of abelian varieties
are Fourier--Mukai partners
of gerby deformations of dual abelian varieties.

While the notion of gerby deformations
is well-established in terms of twisted sheaves,
non-commutative deformations are much harder to define in general.
At the formal level,
the Moyal deformation quantizations give formal non-commutative deformations
of complex tori,
and derived equivalences to formal gerby deformations
are proved in \cite{MR2309993}.
The next problem is to construct non-commutative tori and derived equivalences
for non-formal parameters
(or even as a family over a complex manifold).

The first attempt to define non-commutative complex tori
is given by Schwarz
\cite{MR1865115},
who introduced the notion of complex structures
on non-commutative tori,
which are irrational rotation algebras
(see \cite{MR623572}, for example)
regarded as non-commutative spaces
in the sense of Connes\cite{MR1303779}.
Categories of holomorphic vector bundles
on them
are studied in \cite{MR1977884}.

In the paper \cite{MR2648899} and the preprint \cite{block2006duality}, Block discussed Fourier--Mukai functors between non-commutative complex tori and gerby deformations of dual complex tori, using DG categories which include objects corresponding to quasi-coherent sheaves. 
In \cite{block2006duality}, he also announced to discuss coherent sheaves in a paper in preparation.

We now explain the results of this paper.
Let 
$
X = T / \Gamma
$
be a complex torus,
where
$
T \coloneqq (\bCx)^g
$
and
$\Gamma$ is a discrete subgroup of $T$
isomorphic to $\mathbb Z^g.$
The dual complex torus
$
\hat{X} \coloneqq \Pic^0 X
$
can naturally be identified with
$
\hat{\Gamma}/\hat{T}
$
where
$
\hat{\Gamma}
\coloneqq \Hom(\Gamma, \bCx)
\cong (\bCx)^g
$
and
$
\hat{T}
\coloneqq \Hom(T,\bCx)
\cong \bZ^g.
$
Let $\lambda\in H^2(\hat{T},U(1))\cong U(1)^{g(g-1)/2}$ be an element of the second group cohomology of $\hat{T}$ with values in $U(1).$
We construct a non-commutative deformation of $X$
with parameter $\lambda.$
When $\lambda$ takes values in roots of unity,
we give an equivalence
of the derived category of coherent sheaves
on the non-commutative deformation of $X$ with parameter $\lambda$
and
the derived category of coherent sheaves
on the gerby deformation of $\hat{X}$ with the same parameter $\lambda.$

Our construction of non-commutative complex tori is
different from those in \cite{MR1865115}, \cite{MR1977884} and \cite{block2006duality}.
Ours can be regarded as a patching of Archimedean analog of quantum analytic tori in \cite{MR2596639}.
To prove the equivalence of derived categories, we use the idea of equivariant Fourier--Mukai transforms developed in \cite{MR3039826}.

This paper is organized as follows:
In Section \ref{sc:q-Weyl},
we discuss $q$-Weyl algebras
as toy models
to motivate constructions in later sections.
In Section \ref{sc:commutative},
we recall Fourier--Mukai transforms for complex tori. 
In Section \ref{sc:Laurent},
we define non-commutative complex tori
by deforming sheaves of convergent Laurent series rings
on real tori.
It can be regarded as an Archimedean analog
of the construction of quantum analytic tori in \cite{MR2596639}.
In Section \ref{sc:root of unity},
we discuss a dual pair $X \to Y$
and $\hat{Y} \to \hat{X}$
of finite coverings of tori
associated with a deformation parameter $\lambda$
with values in roots of unity.
In Section \ref{sc:NC-finite},
we describe non-commutative complex tori
whose deformation parameters take values in roots of unity
in terms of a finite sheaf of algebras.
In Section \ref{sc:gerby},
we collect basic definitions
on twisted sheaves on complex manifolds.
In Section \ref{sc:FM},
we introduce
Fourier--Mukai transforms
from gerby complex tori
to 
non-commutative complex tori
at roots of unity
and state Theorem \ref{th:main},
which is the main result in this paper.
In Section \ref{sc:group actions},
we recall basic definitions and results
on finite group actions
on abelian and derived categories,
and discuss twistings by group cocycles.
In Section \ref{sc:examples of group actions},
we discuss group actions on categories
appearing in our construction.
In Section \ref{sc:proof of the main thoerem},
we prove Theorem \ref{th:main}.

\subsection*{Notations and conventions}

We fix the complex number field $\mathbb{C}$ as the ground field.
All modules (resp.~actions) are
right modules (resp.~actions)
unless otherwise specified.
In contrast,
all group actions on categories are left actions.
The word `non-commutative' is synonymous with `not necessarily commutative'.

For a ringed space $\cX=(Z,\cO_{\cX}),$
we write the ringed space $(Z,\cO_{\cX^{\mathrm{op}}}\coloneqq (\cO_{\cX})^\mathrm{op})$ as $\cX^{\mathrm{op}}.$
The category of $\cO_{\cX}$-modules will be denoted by $\Mod\cX.$
For an element $a$ of a complex abelian Lie group $A,$ we write the right translation map $A\rightarrow A,x\mapsto xa$ as $R_a.$ 
For two or three complex manifolds 
$Z_1,Z_2$ 
or 
$Z_1,Z_2,Z_3,$
the projection to the first (resp.~second) component
$Z_1$ 
(resp.~$Z_2$) 
will be denoted by 
$p^{Z_1,Z_2}$ 
or 
$p^{Z_1,Z_2,Z_3}$ 
(resp.~$q^{Z_1,Z_2}$ 
or 
$q^{Z_1,Z_2,Z_3}$). 
\subsection*{Acknowledgment}

The author is deeply grateful to his advisor,
Kazushi Ueda,
for a lot of guidance, useful comments, and encouragement.
The author also thanks Masahiro Futaki for many useful questions,
one of which lead to the formula \eqref{product on points}.
Finally, the author has deep gratitude to his parents
for their various supports throughout his life.

\section{$q$-Weyl algebras} \label{sc:q-Weyl}

The category of $\cO_X$-modules is equivalent
to the category of $\Gamma$-equivariant $\cO_T$-modules,
and the category of $\Gamma$-equivariant $\bC[T]$-modules can be regarded
as a toy model for it.
The latter can be identified
with the category of modules
over the crossed product algebra
$
\bC[T] \rtimes \Gamma.
$

For a parameter 
$
\lambda=(\lambda_{i,j})_{1\le i<j\le g}
\in (\bCx)^{g(g-1)/2},
$
the \emph{$q$-Weyl algebra} is
the non-commutative deformation of $\bC[T]$
defined by
\begin{align}
W_{\lambda}
\coloneqq
\bC \langle t_1^\pm,t_2^\pm,\ldots,t_g^\pm \rangle
/\left(t_i t_j - \lambda_{i,j} t_j t_i \right)_{1\le i<j\le g},
\end{align}
which gives $\bC[T]$ if
$
\lambda = 1 \coloneqq (1, \ldots, 1).
$

Let
$
Q = (q_{i,j})_{i,j=1}^g
$
be a multiplicative period matrix,
i.e.~a matrix
so that 
$
\left\{ 
\gamma_j\coloneqq(q_{i,j})_{i=1}^g
\right\}_{j=1}^g
$
is a free generator of $\Gamma$.
The group $\Gamma$ acts
on the $q$-Weyl algebra by
\begin{align}
t_i \cdot \gamma_j
\coloneqq q_{i,j}^{-1}t_i,
\end{align}
which reduces to the natural action on $\bC[T]$
when $\lambda = 1$.
The crossed product algebra
$
W_\lambda \rtimes \Gamma
$
is isomorphic to
another $q$-Weyl algebra
\begin{multline} \label{eq:q-Weyl nc}
\cW_{\lambda,Q,1}
\coloneqq
\bC\langle
    t_1^\pm,t_2^\pm,\ldots,t_g^\pm,
    \gamma_1^{\pm},\gamma_2^\pm,\ldots,\gamma_g^\pm
\rangle \\
/(\{t_it_j-\lambda_{i,j} t_jt_i\}_{1\le i<j\le g},
\{t_i\gamma_j-q_{i,j}^{-1}\gamma_jt_i\}_{i,j=1}^g,
\{[\gamma_i,\gamma_j]\}_{i,j=1}^g)   
\end{multline}
generated by $2g$ elements.

The parameter $\lambda$
describing a non-commutative deformation of $X$
can be used to describe a gerby deformation
of the dual torus $\hat{X}$:

\begin{dfn}
A \emph{$\lambda$-twisted $\cO_{\hat{X}}$-module}
is a pair
$
(\cM, (\rho_{\hat{\gamma}})_{\hat{\gamma}\in\hat{T}})
$
consisting of an
$\cO_{\hat{\Gamma}}$-module 
$\cM$
and a family
$(\rho_{\hat{\gamma}})_{\hat{\gamma}\in\hat{T}}$
of morphisms 
$
\rho_{\hat{\gamma}} \colon
\cM\rightarrow R_{\hat{\gamma}}^*\cM
$
satisfying
$
\rho_{\hat{\gamma}_j}\circ\rho_{\hat{\gamma}_i}
=\lambda_{i,j}\rho_{\hat{\gamma}_i}\circ\rho_{\hat{\gamma}_j}
$
for all $i, j \in \{ 1, \ldots, g \}$
such that $i<j.$

\end{dfn}

As a toy model of $\lambda$-twisted $\cO_{\hat{X}}$-modules,
we consider modules over
the $q$-Weyl algebra
\begin{multline} \label{eq:q-Weyl gerby}
  \cW_{1,Q,\lambda}
  \coloneqq
  \bC\langle
    \hat{t}_1^\pm,\hat{t}_2^\pm,\ldots,\hat{t}_g^\pm,
    \hat{\gamma}_1^{\pm},\hat{\gamma}_2^\pm,\ldots,\hat{\gamma}_g^\pm
  \rangle \\
  /(\{[\hat{t}_i,\hat{t}_j]\}_{i,j=1}^g,
  \{\hat{t}_i\hat{\gamma}_j-q_{j,i}^{-1}\hat{\gamma}_j\hat{t}_i\}_{i,j},
  \{\hat{\gamma}_i\hat{\gamma}_j
  -\lambda_{i,j}\hat{\gamma}_j\hat{\gamma}_i\}_{1\le i<j\le g}),
\end{multline}
which contains the ring of regular functions
$\bC[\hat{\Gamma}]\coloneqq \bC[\hat{t}_1^\pm,\hat{t}_2^\pm,\ldots,\hat{t}_g^\pm]$.
Note that the roles of $t_i$ and $\gamma_i$ are interchanged
between \eqref{eq:q-Weyl nc}
and \eqref{eq:q-Weyl gerby}.

A toy model for the deformed Poincaré line bundle,
which should give the integral kernel
of the deformed Fourier--Mukai transform,
is the
$
\left( \cW_{1,Q,\lambda}\right)^\op
\otimes
\cW_{\lambda,Q,1}
$-module
$P_\lambda$
such that
\begin{enumerate}
\item
$
P_\lambda
=
\bC[\hat{\Gamma}]\otimes_{\bC}W_\lambda
$
as a
$
\left( \bC[\hat{\Gamma}] \right)^\op
\otimes
W_\lambda
$-module,
\item
actions of $\gamma_i$ are given by
\begin{align*}
\left(
\psi(\hat{t}_1,\hat{t}_2,\ldots,\hat{t}_g)
\otimes
\phi(t_1,t_2,\ldots,t_g)
\right)
\cdot\gamma_i
=\psi(\hat{t}_1,\hat{t}_2,\ldots,\hat{t}_g)\hat{t}_i^{-1}
\otimes
\phi(t_1q_{1,i}^{-1},t_2q_{2,i}^{-1},\ldots,t_gq_{g,i}^{-1}),
\end{align*}
\item
actions of $\hat{\gamma}_i$ are given by
\begin{align*}
\hat{\gamma}_i\cdot
\left(
\psi(\hat{t}_1,\hat{t}_2,\ldots,\hat{t}_g)
\otimes
\phi(t_1,t_2,\ldots,t_g)
\right)
=
\psi(\hat{t}_1q_{i,1},\hat{t}_2q_{i,2},
\ldots,\hat{t}_gq_{i,g})
\otimes
t_i\phi(t_1,t_2,\ldots,t_g).
\end{align*}
\end{enumerate}

Note that the action of $\hat{\gamma}_i$ satisfies 
$
\hat{\gamma}_i \hat{\gamma}_j
=\lambda_{i,j} \hat{\gamma}_j \hat{\gamma}_i
$
since $t_it_j=\lambda_{i,j}t_jt_i$ in $W_\lambda$.

While $q$-Weyl algebras are only toy models
and one has to work with analytic functions
(such as theta functions)
rather than regular functions,
they provide intuition
behind constructions in later sections.
The duality between non-commutative deformations and gerby deformations is clearly visible
in this toy model.
Note also that
if all components of $\lambda$ are roots of unity,
then $W_\lambda$ is finite over its center,
which is the ring of functions on a finite quotient of $T$.

\section{Fourier--Mukai transforms} \label{sc:commutative}

We identify $\cO_X$-modules
with $\Gamma$-equivariant $\cO_T$-modules.
An element
$
\hat{x} \in \hat{\Gamma}
$
determines a $\Gamma$-equivariant
$\cO_T$-module
$
\cL_{\hat{x}}
$
as the trivial 
$\cO_T$-module
equipped with the $\Gamma$-action
\begin{align}
\phi(x) \cdot \gamma
=
\phi(x\gamma^{-1})\hat{x}(\gamma)^{-1}
\end{align}
for
$
\gamma \in \Gamma.
$
Since any
$
t \in \hat{T}
$
gives an isomorphism
\begin{align}
\cL_{\hat{x}}
\simto
\cL_{\hat{x}t^{-1}}, \qquad
\phi(x) \mapsto t(x)\phi(x)
\end{align}
of $\Gamma$-equivariant $\cO_T$-modules,
the map
$
\hat{x} \mapsto \cL_{\hat{x}}
$
descends to a map
$
\hat{\Gamma}/\hat{T}
\simto
\hat{X}
\coloneqq
\Pic^0 X,
$
which is an isomorphism of groups
because of the isomorphisms
\begin{align}
\cL_{\hat{x}} \otimes \cL_{\hat{x}'}
\simto \cL_{\hat{x} \hat{x}'}, \qquad
\phi(x)\otimes\psi(x) \mapsto \phi(x)\psi(x).
\end{align}

The \emph{Poincaré line bundle} $\cP$
is the
$\Gamma\times\hat{T}^{\mathrm{op}}$-equivariant
$\cO_{T \times \hat{\Gamma}}$-module,
defined as the trivial
$\cO_{T \times \hat{\Gamma}}$-module
equipped with the \emph{left} $\hat{T}$-action
\begin{align}
t \cdot \phi(x,\hat{x})
=t(x)\phi(x,\hat{x} t)
\end{align}
and the right $\Gamma$-action
\begin{align}
\phi(x,\hat{x})\cdot \gamma
=\phi(x\gamma^{-1} ,\hat{x})\hat{x}(\gamma)^{-1}.
\end{align}
Let 
$\overline{s}$ 
be the map given by
\begin{align}
\overline{s}
\colon
\hat{\Gamma}\times\hat{\Gamma}\times T
\rightarrow
\hat{\Gamma}\times T, \qquad
(\hat{x}_1,\hat{x}_2,x)
\mapsto (\hat{x}_1\hat{x}_2,x).
\end{align}
The map 
$
s \colon \hat{X}\times\hat{X}\times X\rightarrow \hat{X}\times X
$
is defined similarly.
Since the isomorphism
\begin{align}
\overline{m}
\colon
(p^{\hat{\Gamma},\hat{\Gamma},T})^*\cP
\otimes 
(q^{\hat{\Gamma},\hat{\Gamma},T})^*\cP
\rightarrow
\overline{s}^*\cP, \qquad
\phi\otimes\phi' \mapsto \phi \phi'
\end{align}
of $\cO_{\hat{\Gamma} \times \hat{\Gamma} \times T}$-modules
is $\hat{T}^\mathrm{op} \times \hat{T}^\mathrm{op} \times \Gamma$-equivariant,
it defines an isomorphism 
\begin{align}
m
\colon 
(p^{\hat{X},\hat{X},X})^*\cP
\otimes
(q^{\hat{X},\hat{X},X})^*\cP
\simto
s^*\cP
\end{align}
os $\cO_{\hat{X} \times \hat{X} \times X}$-modules,
which restricts to an isomorphism
\begin{align}
m_{\hat{x}_1,\hat{x}_2}
\colon
\cL_{\hat{x}_1}
\otimes \cL_{\hat{x}_2}
\simto
\cL_{\hat{x}_1 \hat{x}_2}
\end{align}
of $\cO_X$-modules
on
$
\{\hat{x}_1\}\times\{\hat{x}_2\} \times X
$
where
$
\cL_{\hat{x}} \simeq\cP|_{X \times \{\hat{x}\}}
$
by definition.

The \emph{Fourier--Mukai transform}
is the integral functor
with the Poincaré line bundle as the integral kernel;
\begin{align}
  \operatorname{FM}_\cP\colon
  D^b(\hat{X}) \rightarrow D^b(X), \qquad
  \cM \mapsto
  \bR\pps{X}{\hat{X}}
  (\qpl{X}{\hat{X}}\cM
  \otimes_{\cO_{X\times\hat{X}}}
  \cP).
\end{align}
It sends the skyscraper sheaf $\cO_{\hat{x}}$ of a point $\hat{x} \in \hat{X}$
to the line bundle $\cL_{\hat{x}}$.

\begin{thm}[{\cite{MR607081}}] \label{equivalence}
The Fourier--Mukai transform
$\operatorname{FM}_\cP$
is an equivalence,
whose quasi-inverse is given by the integral functor
$\operatorname{FM}_{\cP^{-1}[g]}$ 
with $\cP^{-1}[g]$ as the integral kernel.
\end{thm}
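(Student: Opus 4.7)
The plan is to show that the two compositions
$\operatorname{FM}_{\cP^{-1}[g]} \circ \operatorname{FM}_\cP$ and $\operatorname{FM}_\cP \circ \operatorname{FM}_{\cP^{-1}[g]}$
are naturally isomorphic to the identity functors on $D^b(\hat X)$ and $D^b(X)$ respectively, from which the stated equivalence follows.

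First, I would identify the composition with a single integral functor. By the standard composition formula for integral transforms, $\operatorname{FM}_{\cP^{-1}[g]} \circ \operatorname{FM}_\cP$ is the integral functor on $D^b(\hat X)$ with kernel
\begin{equation*}
K = \bR p_{13\,*}\!\left( p_{12}^*\cP \otimes p_{23}^*\cP^{-1} \right)[g]
\end{equation*}
on $\hat X \times \hat X$, where $p_{ij}$ denotes the projection from $\hat X \times X \times \hat X$ onto the indicated factors. Applying the multiplicative isomorphism $m$, I would rewrite $p_{12}^*\cP \otimes p_{23}^*\cP^{-1} \cong \sigma^*\cP$ where $\sigma(\hat x_1, x, \hat x_2) = (x, \hat x_1 \hat x_2^{-1})$. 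Flat base change along the cartesian square formed by $\sigma$, $p_{13}$, the projection $\pi\colon X \times \hat X \to \hat X$, and the difference map $\mu\colon \hat X \times \hat X \to \hat X$, $(\hat x_1, \hat x_2) \mapsto \hat x_1 \hat x_2^{-1}$, then yields
\begin{equation*}
K \cong \mu^*\bR\pi_*\cP\,[g].
\end{equation*}

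The core step is the cohomological lemma $\bR\pi_*\cP \cong \cO_{\hat 0}[-g]$, where $\cO_{\hat 0}$ denotes the skyscraper sheaf at the identity $\hat 0 \in \hat X$. For $\hat x \neq \hat 0$ the restriction $\cP|_{X \times \{\hat x\}} = \cL_{\hat x}$ is a nontrivial degree-zero line bundle on the complex torus $X$, whose total cohomology vanishes; hence $\bR\pi_*\cP$ is set-theoretically supported at $\hat 0$. To pin down that it sits in a single cohomological degree (namely $g$) with one-dimensional stalk --- despite the nontrivial Hodge numbers $h^{0,i}(X) = \binom{g}{i}$ making cohomology-and-base-change fail at $\hat 0$ --- I would combine the vanishing on $\hat X \setminus \{\hat 0\}$ with Serre duality, which relates $\bR\pi_*\cP$ and $\bR\pi_*\cP^{-1}[g]$ and forces concentration in a single degree.

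Finally, since $\mu^{-1}(\hat 0) = \Delta_{\hat X}$ is the diagonal, the pullback $\mu^*\cO_{\hat 0}$ is isomorphic to $\cO_{\Delta_{\hat X}}$, giving $K \cong \cO_{\Delta_{\hat X}}$, which is the kernel of the identity functor. The second composition $\operatorname{FM}_\cP \circ \operatorname{FM}_{\cP^{-1}[g]}$ is handled by the same argument with the roles of $X$ and $\hat X$ interchanged. The main obstacle is the cohomological lemma on $\bR\pi_*\cP$: since cohomology ranks jump at $\hat 0$, naive base change does not apply, so one must argue via duality or an explicit Koszul/Čech computation of the stalk at the identity.
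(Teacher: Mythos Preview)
Your sketch is the classical Mukai argument and is essentially correct, but note that the paper does not actually prove this theorem: it is stated as a cited result, attributed to Mukai for abelian varieties and to \cite{MR2309993} for complex tori, with no argument given in the text. So there is no ``paper's own proof'' to compare against here.

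That said, your outline matches the standard proof one finds in those references: compose the kernels, use the biextension property of $\cP$ to rewrite the product as a pullback along the difference map, apply flat base change, and reduce to the key lemma $\bR\pi_*\cP \cong \cO_{\hat 0}[-g]$. You correctly flag the one genuinely delicate point---that cohomology-and-base-change fails at $\hat 0$ because $h^{0,i}(X)$ jumps---and your proposed workaround via Grothendieck--Serre duality (together with semicontinuity to force support at $\hat 0$) is indeed how Mukai handles it. One small correction: the argument that $\bR\pi_*\cP$ is \emph{scheme-theoretically} (not just set-theoretically) supported at $\hat 0$ with the reduced structure requires a bit more than you indicate; the duality argument shows concentration in degree $g$ and that the top cohomology sheaf is a line bundle on its support, after which one checks the support is reduced by computing that the fibre at $\hat 0$ is one-dimensional via the Leray spectral sequence or a direct Koszul computation.
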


To be more precise,
Mukai proved this theorem for abelian varieties.
A proof for complex tori can be found in \cite{MR2309993}.

\section{Non-commutative deformations of complex tori} \label{sc:Laurent}

Set
\begin{align}
\varpi \colon T \rightarrow \abs{T} \coloneqq (\bR^{>0})^g, \qquad
(x_1,x_2,\ldots,x_g) \mapsto (|x_1|,|x_2|,\ldots,|x_g|).
\end{align}
For a product
$
D = \prod_{i=1}^g (r_i,R_i)
$
of open intervals,
the ring
$\varpi_* \cO_T(D)$ 
consists of
Laurent series in $g$ variables
with radii of convergence 
$(r_i,R_i)$ for $1\le i\le g$. 

\begin{dfn}
A \emph{unitary deformation parameter}
is an element of
$
Z^2(\hat{T},U(1)),
$
i.e., a map 
$
\lambda \colon \hat{T}\times\hat{T}\rightarrow U(1)
$
satisfying 
\begin{align}\label{eq:cocycle}
\lambda(t_2,t_3)\lambda(t_1t_2,t_3)^{-1}
\lambda(t_1,t_2t_3)\lambda(t_1,t_2)^{-1}=1
\end{align}
for all $t_1, t_2, t_3 \in \hat{T}$.
\end{dfn}

Given a unitary deformation parameter $\lambda$,
the \emph{star product}
$\ast_\lambda$ 
on
$\varpi_* \cO_T(D)$ 
is defined by
\begin{align}\label{eq:star product}
\left(
  \sum_{t\in \hat{T}}a_{t}t
\right)
\ast_\lambda
\left(
  \sum_{t\in\hat{T}}b_tt
\right)
=\sum_{t\in\hat{T}}
\left(
  \sum_{t_1,t_2\in\hat{T},\,t_1t_2=t}
\lambda(t_1,t_2) a_{t_1}b_{t_2}
\right)
t,
\end{align}
which is easily seen to be associative by using \eqref{eq:cocycle}.
The convergence of the right hand side
follows from
\begin{align}
\left|\sum_{t_1,t_2\in\hat{T},\,t_1t_2=t}
\lambda(t_1,t_2) a_{t_1}b_{t_2}\right|
\le \sum_{t_1,t_2\in\hat{T},\,t_1t_2=t}
\abs{a_{t_1}}\abs{b_{t_2}}
\end{align}
and 
\begin{align}
\sum_{t\in\hat{T}}
\left(\sum_{t_1,t_2\in\hat{T},t_1t_2=t}
\abs{a_{t_1}}\abs{b_{t_2}}
\right)t=\left(\sum_{t\in \hat{T}}|a_{t}|t\right)
\left(\sum_{t\in\hat{T}}|b_t|t\right),
\end{align} 
which depends on the unitarity of $\lambda$.
The resulting sheaf of associative algebras on $\abs{T}$
will be denoted by
$
\cO_{\sfT_\lambda}
$
which turns $\abs{T}$ into a non-commutative ringed space
$
\sfT_\lambda \coloneqq (\abs{T}, \cO_{\sfT_\lambda}).
$

A cochain
$
\alpha \in Z^1(\hat{T}, U(1))
$
bounding
$
\lambda, \lambda'
\in Z^2(\hat{T},U(1))
$
is a map
$
\alpha \colon \hat{T}\rightarrow U(1)
$
satisfying
\begin{align}
\lambda'(t_1,t_2)
= \lambda(t_1,t_2)\alpha(t_1)\alpha(t_2)\alpha(t_1t_2)^{-1}.
\end{align}
It gives an isomorphism
\begin{align}\label{boundary}
  \sum_{t\in \hat{T}}a_{t}t
  \mapsto\sum_{t\in \hat{T}}
  \alpha(t)a_{t}t
\end{align}
of the ring of sections,
which ensures that the isomorphism class of
the sheaf
$\cO_{{\sfT_\lambda}}$
of associative algebras
depends only on the cohomology class
$
[\lambda] \in H^2(\hat{T}, U(1)).
$

The natural $T$-action on $T$
induces a $T$-action on $\abs{T}$,
which lifts to a $T$-action
on the ringed space $\sfT_\lambda$
in such a way that
the morphism
$
\rho_{a}
\colon 
\cO_{\sfT_\lambda}
\rightarrow 
(R_{\varpi(a)^{-1}})_*\cO_{\sfT_\lambda}
$
of sheaves of associative algebras
for $a \in T$
is given by
\begin{align}
\sum_{t\in \hat{T}}a_{t}t
\mapsto
\sum_{t\in \hat{T}}a_{t}t(a)^{-1}t.
\end{align}
The action of
$\Gamma$ 
on 
$\abs{T}$
is free since
$\varpi(\gamma)=1$
for $1 \ne \gamma \in \Gamma$
contradicts
the freeness or the properness of $\Gamma$-action on $T$.

\begin{dfn} \label{df:nc complex torus}
The \emph{non-commutative complex torus}
associated with a complex torus $X = T/\Gamma$
and a unitary deformation parameter
$\lambda \in Z^2(\hat{T},U(1))$ 
is the non-commutative ringed space 
$
\sfX_\lambda \coloneqq \sfT_\lambda / \Gamma.
$
\end{dfn}

Recall that
a sheaf $\cM$ of $\cO_\cX$-modules
on a ringed space 
$\cX = (X,\cO_\cX)$ 
is said to be \emph{coherent}
if
\begin{enumerate}
\item
$\cM$ is finitely generated,
i.e.,
for any point $x\in X$,
there exists an open neighborhood $U$ of $x$
and an epimorphism 
$\cO_\cX^{\oplus m}|_U
\rightarrow 
\cM|_U\rightarrow 0$
for some $m \in \bN$,
and
\item
for any open set $U$
and
any $m \in \bN$,
the kernel of
any morphism 
$
\cO_\cX^{\oplus m}|_U
\rightarrow 
\cM|_U$ 
is finitely generated.
\end{enumerate} 
The full subcategory of 
$\Mod \cX$
consisting of
coherent modules
will be denoted by 
$\coh \cX.$

\begin{lem} \label{lm:coherence}
  The sheaf
  $
  \cO_{\sfT_1}
  $
  is coherent.
\end{lem}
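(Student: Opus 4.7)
The plan is to reduce coherence of $\cO_{\sfT_1}$ to Oka's coherence theorem for $\cO_T$, using two key geometric facts: the map $\varpi$ has compact real torus fibers, and the preimage of any basic open $D = \prod_{i=1}^g(r_i,R_i) \subset \abs{T}$ is a product of open annuli $\varpi^{-1}(D) = \prod_{i=1}^g\{z_i \in \bCx : r_i < \abs{z_i} < R_i\}$, which is a Stein manifold. Since $\lambda = 1$ the star product reduces to the ordinary product of Laurent series, so $\cO_{\sfT_1} = \varpi_*\cO_T$ and $\cO_{\sfT_1}(V) = \cO_T(\varpi^{-1}(V))$ for every open $V \subset \abs{T}$. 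Finite generation of $\cO_{\sfT_1}$ over itself being trivial, the task is to show that at every $r \in \abs{T}$ and for every morphism $\phi\colon \cO_{\sfT_1}^m|_D \to \cO_{\sfT_1}|_D$ defined on a polydisc $D \ni r$, the kernel sheaf is finitely generated in a neighborhood of $r$.

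Such a $\phi$ is given by sections $f_1,\ldots,f_m \in \cO_T(\varpi^{-1}(D))$, and I would identify its kernel sheaf $\mathcal{K}$ on $\abs{T}$ with $\varpi_*\widetilde{\mathcal{K}}$, where $\widetilde{\mathcal{K}}$ is the kernel on $\varpi^{-1}(D)$ of the $\cO_T$-linear map $\cO_T^m \to \cO_T$ with the same coefficients $f_j$. By Oka's theorem $\widetilde{\mathcal{K}}$ is a coherent sheaf on the Stein manifold $\varpi^{-1}(D)$. Next I would shrink to a sub-polydisc $D'$ with $r \in D'$ and $\overline{D'} \subset D$. Compactness of the fibers of $\varpi$ makes $\overline{\varpi^{-1}(D')}$ compact in $\varpi^{-1}(D)$, so Cartan's theorem A combined with compactness produces finitely many sections $s_1,\ldots,s_N \in \Gamma(\varpi^{-1}(D),\widetilde{\mathcal{K}})$ generating $\widetilde{\mathcal{K}}$ on a neighborhood of $\overline{\varpi^{-1}(D')}$, which after shrinking $D'$ we may take to be $\varpi^{-1}(D')$ itself.

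Let $\mathcal{R}$ be the kernel of the resulting surjection $\cO_T^N|_{\varpi^{-1}(D')} \to \widetilde{\mathcal{K}}|_{\varpi^{-1}(D')}$, again coherent. For any sub-polydisc $V \subset D'$, $\varpi^{-1}(V)$ is Stein, so Cartan's theorem B yields $H^1(\varpi^{-1}(V),\mathcal{R}) = 0$, and applying the global-sections functor over $\varpi^{-1}(V)$ to the exact sequence $0 \to \mathcal{R} \to \cO_T^N \to \widetilde{\mathcal{K}} \to 0$ preserves exactness. Consequently every element of $\widetilde{\mathcal{K}}(\varpi^{-1}(V))$ is an $\cO_T(\varpi^{-1}(V))$-linear combination of the $s_j$, which when translated back to $\abs{T}$ says precisely that $s_1,\ldots,s_N$ generate $\mathcal{K}$ as an $\cO_{\sfT_1}$-module on $D'$.

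The main obstacle is this final step, where one must promote finite local generation of the coherent sheaf $\widetilde{\mathcal{K}}$ on $T$ to finite generation of the pushforward $\varpi_*\widetilde{\mathcal{K}}$ as a $\varpi_*\cO_T$-module on $\abs{T}$. Local generation on $T$ is by itself insufficient; the uniform generation over entire fibers of $\varpi$ that one needs relies on the vanishing of $R^1\varpi_*$ applied to the syzygy sheaf, which is exactly Cartan B on the Stein manifolds $\varpi^{-1}(V)$.
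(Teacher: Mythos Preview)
Your argument is correct and follows essentially the same route as the paper: lift the morphism along $\varpi$, use Oka on $T$ to get a coherent kernel, exploit that polyannuli $\varpi^{-1}(V)$ are Stein to produce finitely many generating sections, and then push back down. The only differences are cosmetic: you make the shrinking/compactness step explicit and you justify the surjectivity of the pushed-forward map by Cartan~B on the syzygy sheaf, whereas the paper compresses this into the single assertion that $\varpi_*$ is exact (citing a standard result applied fiberwise).
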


\begin{proof}
It is clear that $\cO_{\sfT_1}$ is finitely generated.
For any open subset $U$ of $\abs{T}$,
a morphism 
$
\alpha \colon
\cO_{\sfT_1}^{\oplus m}|_U
\rightarrow 
\cO_{\sfT_1}|_U$
is the same as a morphism
$
\tilde{\alpha} \colon
\cO_{T}^{\oplus m}|_{\varpi^{-1}(U)}
\to
\cO_{T}|_{\varpi^{-1}(U)}.
$
For any $x \in U$,
let $V \subset U$ be an open neighborhood of $x$
obtained as the product of intervals.
Then $\varpi^{-1}(V)$ is Stein
and hence there is an epimorphism $\tilde{\beta}$
from $\cO_{\varpi^{-1}(V)}^{\oplus m'}$
to
the kernel of
$\tilde{\alpha}|_{\varpi^{-1}(V)}$
for some integer $m'$.
We ensure that 
$\varpi_*$ is exact
(and hence the morphism 
$\beta\coloneqq\varpi_*\tilde{\beta}$
is also an epimorphism)
by applying \cite[Corollary 11.5.4]{MR1900941} for every fiber of $\varpi$.
This shows that $\ker \alpha$ is finitely generated,
and Lemma \ref{lm:coherence} is proved.
\end{proof}

Non-commutative complex tori 
at $\lambda = 1$
are usual complex tori:

\begin{prop}\label{pr:loceq}
The adjunction
$
\varpi^* \dashv \varpi_*
$
induces an equivalence
$
\coh T
\simeq
\coh \sfT_1.
$
\end{prop}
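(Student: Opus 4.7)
The plan is to show that $\varpi_*$ and $\varpi^*$ restrict to quasi-inverse functors between $\coh T$ and $\coh \sfT_1$ by checking (a) that both preserve coherence and (b) that the unit and counit of the adjunction are isomorphisms. The basic geometric facts I would use are that for any product $D = \prod_{i=1}^g (r_i, R_i) \subset \abs{T}$, the preimage $V \coloneqq \varpi^{-1}(D)$ is a Stein polyannulus containing the compact fiber $\varpi^{-1}(y) \cong (S^1)^g$ for each $y \in D$, and that $\varpi_*$ is exact, as established in the proof of Lemma \ref{lm:coherence}.

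The heart of the argument is a local finite-presentation statement: for any coherent $\cM$ on $T$ and any $y \in \abs{T}$, there exist $D$ as above and a presentation $\cO_V^m \to \cO_V^n \to \cM|_V \to 0$ on $V = \varpi^{-1}(D)$. To produce this I would apply Cartan's Theorem A on the Stein open $V$ to obtain finitely many global sections $s_1, \ldots, s_n \in \cM(V)$ which generate $\cM$ at every point of the compact fiber $\varpi^{-1}(y)$; the closed locus in $V$ where they fail to generate is disjoint from $\varpi^{-1}(y)$, so shrinking $D$ in the radial direction yields an epimorphism $\cO_V^n \to \cM|_V$. Repeating the argument on the coherent kernel produces the desired finite presentation after a further radial shrinking.

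Given this, the rest is formal. Pushing the presentation forward by the exact functor $\varpi_*$ and using $\varpi_* \cO_T^n = \cO_{\sfT_1}^n$ yields a finite presentation of $\varpi_* \cM$ on $D$, proving $\varpi_* \cM$ is coherent; pulling it back by the right exact $\varpi^*$ and using $\varpi^* \cO_{\sfT_1} = \cO_T$ returns the original presentation on $V$, so the counit $\varpi^* \varpi_* \cM \to \cM$ is an isomorphism on $V$ and hence globally. Conversely, a coherent $\cN$ on $\sfT_1$ admits by definition a local finite presentation over each small $D$; applying $\varpi^*$ gives a finite presentation of $\varpi^* \cN|_V$, so $\varpi^* \cN$ is coherent on $T$ by coherence of $\cO_T$, and applying the exact $\varpi_*$ back recovers the original presentation, showing that the unit $\cN \to \varpi_* \varpi^* \cN$ is an isomorphism. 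The main technical obstacle is the finite-presentation statement on the polyannulus $V$, which is the only step where the Stein property of $V$ must be combined carefully with the compactness of the fiber.
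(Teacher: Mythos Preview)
Your proposal is correct and follows essentially the same route as the paper: both arguments reduce to the observation that coherent sheaves on either side are exactly the locally finitely presented ones, and that the right-exact functors $\varpi_*$ and $\varpi^*$ interchange $\cO_T$ and $\cO_{\sfT_1}$ and hence carry finite presentations to finite presentations. The paper compresses your polyannulus argument into the single phrase ``Oka coherence implies coherent iff finitely presented,'' whereas you make explicit (via Cartan~A and compactness of the fibres) why the finite presentation can be arranged on a set of the form $\varpi^{-1}(D)$; this extra care is warranted, since it is exactly what is needed for the push-forward to yield a presentation over $D$.
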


\begin{proof}
Oka coherence theorem implies that
an object of $\Mod T$ is coherent
if and only if it is finitely presented.
Similarly,
Lemma \ref{lm:coherence} implies that
an object of $\Mod \sfT_1$ is coherent
if and only if it is finitely presented.

The functors
$\varpi^*$
and
$\varpi_*$
induce
mutually inverse functors
on categories of finitely presented modules
since
\begin{enumerate}
\item
the functor 
$\varpi_*$
is exact
(and hence preserves cokernels in particular),
\item
the functor $\varpi^*$ preserves cokernels
since it is a left adjoint, and
\item
$\varpi_*$ 
and
$\varpi^*$
interchanges
$\cO_T$
and
$\cO_{\sfT_1}$.
\end{enumerate}
\end{proof}

\begin{cor}\label{cr:loceq}
The adjunction
$
(\varpi_X)^* \dashv (\varpi_X)_*
$
induces an equivalence
$
\coh X
\simeq
\coh \sfX_1,
$
where 
$
\varpi_X\colon X\rightarrow \abs{X}
$
is the map induced by 
$
\varpi.
$
\end{cor}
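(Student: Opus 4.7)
The plan is to descend the equivalence from Proposition~\ref{pr:loceq} along the free $\Gamma$-action. First, I would use that the $\Gamma$-action on $\abs{T}$ is free (as noted just before Definition~\ref{df:nc complex torus}) to identify $\coh X$ with the category of $\Gamma$-equivariant objects in $\coh T$, and $\coh \sfX_1$ with the category of $\Gamma$-equivariant objects in $\coh \sfT_1$. In both cases the quotient ringed space is glued from open sets whose preimages are disjoint unions of translates, so the usual descent for coherent sheaves under a free discrete group action applies.

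Next, I would observe that $\varpi \colon T \to \abs{T}$ is $\Gamma$-equivariant by construction: the $T$-action on $\abs{T}$ is defined precisely as the action making $\varpi$ equivariant, and $\Gamma \subset T$. It follows that both $\varpi^*$ and $\varpi_*$ intertwine $\Gamma$-translation pullbacks, so they lift to functors between the corresponding $\Gamma$-equivariant categories, and the unit and counit of $\varpi^* \dashv \varpi_*$ are $\Gamma$-equivariant natural transformations. By Proposition~\ref{pr:loceq}, these unit and counit are already isomorphisms on coherent modules, so the lifted functors are mutually inverse equivalences between $\Gamma$-equivariant coherent $\cO_T$-modules and $\Gamma$-equivariant coherent $\cO_{\sfT_1}$-modules.

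Transporting this equivalence across the identifications of the first step yields the asserted equivalence $\coh X \simeq \coh \sfX_1$, and by construction it is induced by $(\varpi_X)^* \dashv (\varpi_X)_*$. The only point requiring some care is the non-commutative descent in the first step; however, since $\Gamma$ acts freely on $\abs{T}$, every point of $\abs{X}$ has an open neighborhood $U$ whose preimage in $\abs{T}$ is a disjoint union of translates each mapping isomorphically (as a ringed space) to $(U, \cO_{\sfX_1}|_U)$. This reduces the descent statement to the standard one, so I do not expect it to be the main obstacle; the substantive work has already been done in Proposition~\ref{pr:loceq}, and the present corollary is essentially a $\Gamma$-equivariant bookkeeping exercise.
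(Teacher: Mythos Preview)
Your proposal is correct. The paper states this as a corollary without proof, so there is no explicit argument to compare against; your $\Gamma$-equivariant descent from Proposition~\ref{pr:loceq} is a valid way to fill in the details. A marginally more direct reading (and likely what the paper intends) is that every ingredient in the proof of Proposition~\ref{pr:loceq}---exactness of $\varpi_*$, right-exactness of $\varpi^*$, and the interchange of the structure sheaves---is local on the base $\abs{T}$, so since $\Gamma$ acts freely on $\abs{T}$ the same argument applies verbatim to $\varpi_X$ over $\abs{X}$. Both routes amount to the same observation, and neither requires new ideas beyond Proposition~\ref{pr:loceq}.
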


The category $\coh \cO_{{\sfX_\lambda}}$ is abelian
if Problem \ref{pb:coherence} below
has an affirmative answer:

\begin{prob}[Oka coherence for non-commutative tori]
 \label{pb:coherence}
Is $\cO_{{\sfT_\lambda}}$ coherent?
\end{prob}

Yet another problem
is a generalization to non-unitary deformation parameters,
which would be needed for the duality
with general gerby deformations.

\section{Deformation parameters at roots of unity} \label{sc:root of unity}

As one can see
(e.g.~by noting that $\hat{T}$-modules are equivalent to 
$\bZ[T]\cong\bZ[t_1^{\pm1},t_2^{\pm},\ldots,t_g^{\pm}]$-modules, and the trivial $\hat{T}$-modules $\bZ$ has the Koszul resolution associated to $t_1-1,t_2-1,\ldots,t_g-1$)
that the cohomology
$H^i(\hat{T}, A)$
with coefficients in an abelian group $A$
with the trivial $\hat{T}$-action
is isomorphic to
$
\Hom(\wedge^i\hat{T}/(\wedge^i\hat{T})_{\mathrm{tors}},A)
\cong A^{\binom{g}{i}}
$
(note that 
$(\wedge^i\hat{T})_{\mathrm{tors}}$ 
is generated by torsion elements 
$a \wedge a \wedge t_1 \wedge \cdots \wedge t_{i-2}$
($a,t_1,\ldots,t_{i-2}\in \hat{T}$)
of order $2$).
Let
$
\Lambda \in \Hom(\wedge^2 \hat{T}, U(1))
$
be the element corresponding to
the class
$
[\lambda] \in H^2(\hat{T}, U(1))
$
of a unitary deformation parameter
$
\lambda \in Z^2(\hat{T}, U(1)),
$
and set
\begin{align}
\hat{H} &\coloneqq
\left\{
t \in \hat{T} \relmid \Lambda(t \wedge t') = 1 \text{ for all } t'\in\hat{T}
\right\},
\end{align}
so that one has an exact sequence
\begin{align}
1 \rightarrow \hat{H} \rightarrow \hat{T} \rightarrow \hat{K} \rightarrow 1,
\end{align}
and
$
[\lambda] \in H^2(\hat{T}, U(1))
$
descends to an element of
$
H^2(\hat{K}, U(1)),
$
which can be represented by a bilinear cochain
in
$
Z^2(\hat{K}, U(1)).
$

For the rest of this paper
and
unless otherwise specified,
we will
assume that a unitary deformation parameter
$\lambda \in Z^2(\hat{T},U(1))$
is contained in
$Z^2(\hat{T},\bsmu_N)$
for some positive integer $N$
where
$
\bsmu_N
\coloneqq
\left\{
\zeta \in U(1) \relmid \zeta^N = 1
\right\}.
$
This implies that $\hat{K}$ is a finite abelian group,
and
we will also fix
a bilinear map
\begin{align}
\lambda \colon \hat{K} \otimes_\bZ \hat{K} \to \bsmu_N
\end{align}
representing
$
[\lambda].
$

The dual group
$K \coloneqq \Hom(\hat{K}, \bCx)$
can be identified with the kernel of the map
$
T \to H
$
dual to the inclusion $\hat{H} \to \hat{T}$,
so that one has an exact sequence
\begin{align}
1 \rightarrow K \rightarrow T \xrightarrow{\pi_T} H \rightarrow 1.
\end{align}
Since $\Gamma \cap K$ is the trivial group,
the free action of $K$ on $T$ descends to a free action of $K$ on $X \coloneqq T/\Gamma$,
so that one has an exact sequence
\begin{align}
1 \rightarrow K \rightarrow X \xrightarrow{\pi} Y \rightarrow 1
\end{align}
where $Y$ is a complex torus.
We also have an exact sequence
\begin{align}
1 \rightarrow \hat{K} \rightarrow \hat{Y} \xrightarrow{\hat{\pi}} \hat{X} \rightarrow 1
\end{align}

Since $K$ goes to the identity
under the homomorphism
$
\varpi \colon T \to \abs{T},
$
there exists
$
\varpi_Y \colon Y \rightarrow \abs{X}
$
making the diagram
\begin{align}
  \begin{diagram}
    \node{X}\arrow{e,t}{\pi}
    \arrow{se,b}{\varpi_X}
    \node{Y}
    \arrow{s,r}{\varpi_Y}\\
    \node{}\node{\abs{X}}
  \end{diagram}
\end{align}
commute.

\section{Non-commutative deformations at roots of unity} \label{sc:NC-finite}

We have an isomorphism
\begin{align} \label{eq:varpi_* O_X}
\pi_* \cO_X
\cong
\bigoplus_{\hat{k} \in \hat{K}} \cL_{\hat{k}}
\end{align}
of sheaves of $\cO_Y$-algebras on 
$\hat{Y} \coloneqq \Pic^0 Y$.
The summand 
$
\cL_{\hat{k}}
$
is locally generated by a monomial function
$
t\in\hat{T}
$
representing 
$
\hat{k}.
$
We define a star product 
$\star_\lambda$
on 
$
\pi_* \cO_X
$
by
\begin{align}
\star_\lambda \colon
\bigoplus_{\hat{k} \in \hat{K}} \cL_{\hat{k}}
\times
\bigoplus_{\hat{k} \in \hat{K}} \cL_{\hat{k}}
&\rightarrow
\bigoplus_{\hat{k} \in \hat{K}} \cL_{\hat{k}} \\
((\phi_{\hat{k}})_{\hat{k}},(\psi_{\hat{k}})_{\hat{k}})
&\mapsto
\left(
\sum_{{\hat{k}}_1{\hat{k}}_2={\hat{k}}}
\lambda({\hat{k}}_1,{\hat{k}}_2)m_{{\hat{k}}_1,{\hat{k}}_2}
(\phi_{{\hat{k}}_1}\otimes\psi_{{\hat{k}}_2})
\right)_{\hat{k}}.
\label{star product}
\end{align}

We write the resulting sheaf
$
\left(
\pi_* \cO_X, \star_\lambda
\right)
$
of $\cO_Y$-algebras
as $\cO_{X_\lambda}$,
and
the ringed space
$
\left(
Y,
\cO_{X_\lambda}
\right)
$
as $X_\lambda$.

\begin{prop} \label{pr:Laurent and finite}
One has an isomorphism
$(\varpi_Y)_* \cO_{X_\lambda}
\cong \cO_{\sfX_\lambda}$
of sheaves of algebras.
\end{prop}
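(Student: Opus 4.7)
The plan is to match sections of the two sheaves over a base of opens on the common underlying space $\abs{X}$, first ignoring the algebra structure, and then checking that the two multiplications agree.

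First I would identify the underlying sheaves. For an open $V\subseteq\abs{X}$ obtained as the image of a product of intervals in $\abs{T}$, the relation $\varpi_X=\varpi_Y\circ\pi$ gives
\[
(\varpi_Y)_*\cO_{X_\lambda}(V)=(\pi_*\cO_X)(\varpi_Y^{-1}(V))=\cO_X(\varpi_X^{-1}(V)),
\]
since the underlying $\cO_Y$-module of $\cO_{X_\lambda}$ is $\pi_*\cO_X$. On the other side, by Definition \ref{df:nc complex torus} one has $\cO_{\sfX_\lambda}(V)=\cO_{\sfT_\lambda}(\bar\varpi^{-1}(V))^\Gamma$, where $\bar\varpi\colon\abs{T}\to\abs{X}$ is the $\Gamma$-quotient; since the underlying sheaf of $\cO_{\sfT_\lambda}$ on $\abs{T}$ is $\varpi_*\cO_T$ and the $\Gamma$-action on sections is the ordinary right translation $\sum a_t\,t\mapsto\sum a_t t(\gamma)^{-1}t$, this group of $\Gamma$-invariants is again $\cO_X(\varpi_X^{-1}(V))$. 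Both sheaves of abelian groups therefore admit the same description in terms of convergent Laurent series $\sum_{t\in\hat T}a_t\,t$.

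Next I would match the multiplications under this identification. The star product $\ast_\lambda$ on $\cO_{\sfT_\lambda}$ reads
\[
\Bigl(\sum a_t t\Bigr)\ast_\lambda\Bigl(\sum b_t t\Bigr)=\sum_t\Bigl(\sum_{t_1 t_2=t}\lambda(t_1,t_2)\,a_{t_1}b_{t_2}\Bigr)t,
\]
with $\lambda$ the $\hat T$-valued cocycle. Splitting a section of $\pi_*\cO_X=\bigoplus_{\hat k\in\hat K}\cL_{\hat k}$ according to its $\hat K$-grading and applying \eqref{star product} gives
\[
\Bigl(\sum a_t t\Bigr)\star_\lambda\Bigl(\sum b_t t\Bigr)=\sum_t\Bigl(\sum_{t_1 t_2=t}\lambda(\bar t_1,\bar t_2)\,a_{t_1}b_{t_2}\Bigr)t,
\]
where $\bar t$ denotes the image of $t$ in $\hat K$ and $\lambda$ on the right is the fixed bilinear representative.

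To conclude, I would invoke that the pullback of the bilinear $\hat K$-representative to $\hat T$ lies in the same class $[\lambda]\in H^2(\hat T,U(1))$ as the original $\hat T$-cocycle, hence differs from it by the coboundary of some $\alpha\in C^1(\hat T,U(1))$; the isomorphism \eqref{boundary} induced by $\alpha$ then converts one star product into the other and yields the desired isomorphism of sheaves of algebras. The main obstacle will be the cocycle bookkeeping in this last step: one must ensure that the isomorphism \eqref{boundary} is compatible with the $\Gamma$-action on $\cO_{\sfT_\lambda}$ so that it descends to $\abs{X}$, and compatible with the $\hat K$-grading used to define $\star_\lambda$. With $\alpha$ chosen so as to respect these two compatibilities, the coincidence of the two multiplications is tautological from the formulas above.
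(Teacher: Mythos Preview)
Your approach is essentially the paper's: identify the underlying sheaves via $\varpi_X=\varpi_Y\circ\pi$, then group the Laurent expansion by $\hat K=\hat T/\hat H$-cosets and compare $\ast_\lambda$ with $\star_\lambda$. The only discrepancy is your final coboundary step. Recall that in Section~\ref{sc:root of unity} the paper fixes, for the remainder, a bilinear $\lambda\colon\hat K\otimes\hat K\to\bsmu_N$ representing $[\lambda]$ and uses the same symbol for its pullback to $\hat T$; under this normalization one has $\lambda(t_1,t_2)=\lambda(\bar t_1,\bar t_2)$ on the nose, and the paper's proof is precisely your computation with $\alpha=1$. Your worry about compatibilities, while harmless (the map \eqref{boundary} visibly commutes with the $\Gamma$-action and preserves the monomial grading), is therefore unnecessary once you invoke that convention.
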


\begin{proof}
A direct calculation shows that
\begin{align}
&\left(
\sum_{t_1\in \hat{T}}a_{t_1}t_1
\right)
\ast_\lambda
\left(
\sum_{t_2\in\hat{T}}b_{t_2}t_2
\right)\\
&=
\left(
\sum_{t\in\hat{T}/\hat{H}}
\left(  
\sum_{t_1\bmod\hat{H}=t}
a_{t_1}t_1
\right)
\right)
\ast_\lambda
\left(
\sum_{t'\in\hat{T}/\hat{H}}
\left(  
\sum_{t_2\bmod\hat{H}=t'}
b_{t_2}t_2
\right)  
\right)\\
&=\sum_{t\in\hat{T}/\hat{H}}
\left(
\sum_{t'\in\hat{T}/\hat{H}}
\left(
\sum_{t_1\bmod\hat{H}=t}
\left(
\sum_{t_2\bmod\hat{H}=t'}
\lambda(t_1,t_2) a_{t_1}b_{t_2}t_1t_2
\right)
\right)
\right)\\
&=\sum_{t\in\hat{T}/\hat{H}}
\left(
\sum_{t'\in\hat{T}/\hat{H}}
\left(
\sum_{t_1\bmod\hat{H}=t}
\left(
\sum_{t_2\bmod\hat{H}=t'}
\lambda(t,t') a_{t_1}b_{t_2}t_1t_2
\right)
\right)
\right)\\
&=\sum_{t\in\hat{T}/\hat{H}}
\left(
\sum_{t'\in\hat{T}/\hat{H}}
\lambda(t,t')
m_{t,t'}
\left(
\sum_{t_1\bmod\hat{H}=t}
a_{t_1}t_1,
\sum_{t_2\bmod\hat{H}=t'}
b_{t_2}t_2
\right)
\right)
\end{align}
coincide with 
$\star_\lambda$.
\end{proof}

\begin{prop}
  The adjunction 
  $(\varpi_Y)^* \dashv (\varpi_Y)_*$
  induces an equivalence 
  $\coh \Xl \simeq \coh \sfX_\lambda.$
\end{prop}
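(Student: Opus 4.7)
The plan is to reduce to Corollary \ref{cr:loceq} applied to the complex torus $Y$ and then to transport the coherent algebra $\cO_{X_\lambda}$ across the resulting equivalence.

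First I would note that $\abs{Y}$ is canonically identified with $\abs{X}$, and $\varpi_Y$ coincides with the absolute-value map of $Y$. Indeed, finiteness of $\hat{K}$ forces $K = \Hom(\hat{K}, \bCx)$ to lie inside $U(1)^g = \ker \varpi$, so $\varpi$ descends through $H = T/K$ to the absolute-value map of $H$, whose further quotient by $\pi_T(\Gamma)$ identifies $\abs{Y}$ with $\abs{X}$. In particular, Corollary \ref{cr:loceq} applied to $Y$ supplies an equivalence $\coh Y \simeq \coh \sfY_1$ induced by $(\varpi_Y)^* \dashv (\varpi_Y)_*$, where $\sfY_1$ has underlying space $\abs{X}$ and structure sheaf $(\varpi_Y)_* \cO_Y$.

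Next I would transport $\cO_{X_\lambda}$ across this equivalence. Because $\cO_{X_\lambda} = \bigoplus_{\hat{k}\in\hat{K}} \cL_{\hat{k}}$ is locally free of finite rank over $\cO_Y$, it is a coherent $\cO_Y$-algebra, and an $\cO_{X_\lambda}$-module is coherent if and only if its underlying $\cO_Y$-module is coherent. Applying $(\varpi_Y)_*$ to the decomposition and invoking Proposition \ref{pr:Laurent and finite}, we obtain $(\varpi_Y)_* \cO_{X_\lambda} \cong \cO_{\sfX_\lambda}$ as sheaves of algebras, and $\cO_{\sfX_\lambda}$ is locally free of the same finite rank over $(\varpi_Y)_* \cO_Y$, giving the analogous coherence criterion on the other side. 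Symmetric monoidality of the adjunction $(\varpi_Y)^* \dashv (\varpi_Y)_*$ on quasi-coherent sheaves then promotes it to an adjunction between $\Mod X_\lambda$ and $\Mod \sfX_\lambda$ that restricts, by the two coherence criteria, to the desired equivalence $\coh X_\lambda \simeq \coh \sfX_\lambda$.

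The main technical obstacle is verifying that the unit $\cO_{\sfX_\lambda} \to (\varpi_Y)_*(\varpi_Y)^* \cO_{\sfX_\lambda}$ and counit $(\varpi_Y)^*\cO_{\sfX_\lambda} \to \cO_{X_\lambda}$ of the adjunction are isomorphisms of \emph{algebras}, not merely of sheaves, so that module structures genuinely carry across the adjunction. Concretely, one must track that the star product $\star_\lambda$ on $\cO_{X_\lambda}$ corresponds to $\ast_\lambda$ on $\cO_{\sfX_\lambda}$ under both $(\varpi_Y)_*$ and $(\varpi_Y)^*$; this is essentially the content of Proposition \ref{pr:Laurent and finite} together with its pullback counterpart, but requires routine bookkeeping of the twisted multiplications. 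Once this is in place, the extension from the base equivalence to the module-category equivalence is purely formal.
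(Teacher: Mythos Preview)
Your proposal is correct and follows essentially the same route as the paper: reduce coherence over $\cO_{X_\lambda}$ (resp.\ $\cO_{\sfX_\lambda}$) to coherence over $\cO_Y$ (resp.\ $\cO_{\sfY_1}$) via finiteness of the algebra, invoke Corollary \ref{cr:loceq} for $Y$, and use Proposition \ref{pr:Laurent and finite} to match the algebra structures. The paper phrases the module-theoretic step by naming the intermediate categories $(\coh Y)^{\hat{K},\lambda}$ and $(\coh \sfY_1)^{\hat{K},\lambda}$ rather than invoking symmetric monoidality of the adjunction, but the content is the same.
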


\begin{proof}
We write the full subcategory of  
$\Mod \Xl$
(resp.~$\Mod \sfX_\lambda$)
consisting of coherent 
$\cO_Y$-modules
(resp.~$\cO_{\sfY_1}$-modules)
as
$(\coh Y)^{\hat{K},\lambda}$
(resp.~$(\coh \sfY_1)^{\hat{K},\lambda}$). 
Since $\cO_{\Xl}$
is a finite 
$\cO_Y$-algebra
by definition and 
$\cO_{\sfX_\lambda}$
is a finite 
$\cO_{\sfY_1}$-algebra 
by Proposition \ref{pr:Laurent and finite},
the category 
$\coh \Xl$
(resp.~$\coh \sfX_\lambda$)
is equivalent to 
$(\coh Y)^{\hat{K},\lambda}$
(resp.~$(\coh \sfY_1)^{\hat{K},\lambda}$).
It follows from Corollary
\ref{cr:loceq}
and Proposition
\ref{pr:Laurent and finite}
that
\linebreak
$\cO_{\sfX_\lambda}\in\ob{\coh\mathsf{Y}_1}$
and the adjunction 
$(\varpi_Y)^*\dashv (\varpi_Y)_*$
induces an equivalence 
$
(\coh Y)^{\hat{K},\lambda}
\simeq
(\coh \sfY_1)^{\hat{K},\lambda}.
$
\end{proof}

In particular, 
$\cO_{\sfX_\lambda}$
is a coherent sheaf on 
$\sfX_\lambda$
and hence
$\coh\cO_{\sfX_\lambda}$
is an abelian category.
\begin{rem}
It follows from the definition of $\hat{K}$
that there exists an isomorphism
$
\lambda^\sharp \colon \hat{K} \rightarrow K
$
such that 
$
\lambda(\hat{k}_1,\hat{k}_2)
=
\hat{k}_2(\lambda^\sharp(\hat{k}_1)).
$
If we write the inverse of 
$\lambda^\sharp$ 
as
$\lambda_\flat$ 
and
define
$
\omega\colon K\otimes_\mathbb Z K\rightarrow U(1)
$
by 
$
\omega(k_1,k_2)=\lambda(\lambda_\flat(k_1),\lambda_\flat(k_2)),
$
then the star product on $\cO_{X_\lambda}$ can alternatively be described as
\begin{align}\label{product on points}
  \phi(x)\star_\lambda\psi(x)
  =\frac{1}{\sharp K}\disum_{k_1,k_2\in K}
  \omega(k_1,k_2)\phi(xk_1)\psi(xk_2).
\end{align}
\end{rem}

A coherent sheaf on $X_\lambda$ consists of
an $\cO_Y$-module 
$\cM$
and
a collection of morphisms
$\{m_{\hat{k}}\colon
\cM\otimes\cL_{\hat{k}}\rightarrow\cM\}_{\hat{k}\in\hat{K}}$ 
such that the diagram
\begin{align}
  \begin{diagram}
    \node{\cM\otimes\cL_{\hat{k}_1}\otimes\cL_{\hat{k}_2}}
    \arrow{e,t}{m_{\hat{k}_1}\otimes\id}
    \arrow{s,t}{\lambda(\hat{k}_1,\hat{k}_2)\id\otimes m_{\hat{k}_1\hat{k}_2}}
    \node{\cM\otimes\cL_{\hat{k}_2}}
    \arrow{s,r}{m_{\hat{k}_2}}\\
    \node{\cM\otimes\cL_{\hat{k}_1\hat{k}_2}}
    \arrow{e,t}{m_{\hat{k}_1\hat{k}_2}}
    \node{\cM}
  \end{diagram}
\end{align}
commutes.
The dual $\cO_Y$-module
$
\cM^{-1} \coloneqq \cHom_{\cO_Y}(\cM, \cO_Y)
$
equipped with the transposes of $m_t$
gives a coherent sheaf on $X_\lambda^\op$.

\section{Twisted sheaves} \label{sc:gerby}

Let
$M$ 
be a complex manifold
and 
$\alpha\in H^2(M,\cO_M^*)$ 
be a second \'{e}tale cohomology class of 
$\cO_M^*$.
Take an \'{e}tale covering 
$\cU=(U_i)_i$ 
and a representative 
$(\alpha_{i,j,k})$ 
of 
$\alpha$ 
on 
$\cU$.
We write the  projections
from
$U_i \times_M U_j$ 
to the first (resp.~second) component as
$
I_{i,j}
$
(resp.~$J_{i,j}$).

\begin{dfn}
An
\emph{$\alpha$-twisted sheaf}
on
$M$ 
is a collection
$
((\cF_i)_i,
(\rho_{i,j})_{i,j})
$ 
of
$\cO_{U_i}$-modules 
$\cF_i$
and isomorphisms
$
\rho_{i,j}
\colon 
I_{i,j}^*\cF_i
\rightarrow
J_{i,j}^*\cF_j$ 
satisfying 
$
\rho_{i,k}^{-1}\circ \rho_{j,k}\circ \rho_{i,j}
=\alpha_{i,j,k}\operatorname{id}.
$
An $\alpha$-twisted sheaf is \emph{coherent}
if all $\cF_i$ are coherent.
\end{dfn}

The category of $\alpha$-twisted sheaves on 
$M$
and the full subcategory consisting of $\alpha$-twisted coherent sheaves
will be denoted by
$\Mod M^\alpha$ 
and
$\coh M^\alpha$
respectively.
Note that for an $\cO_M$-algebra $\cA$ and the resulting ringed space $\cX\coloneqq (M,\cA)$, we similarly can define the notion of $\alpha$-twisted sheaves on $\cX$ and categories $\Mod\cX^\alpha,\coh\cX^\alpha$. 
They do not depend
on the choice of $\cU$ and $(\alpha_{i,j,k})$
up to equivalence.
One has the tensor product functor
\begin{align}
\otimes
\colon
\Mod M^\alpha \times \Mod M^{\alpha'}
&\to
\Mod M^{\alpha \alpha'}, \\
\left(
(
(\cF_i)_i,
(\rho_{i,j})_{i,j}
),
(
(\cF_i')_i,
(\rho_{i,j}')_{i,j}
)
\right)
&\mapsto
(
(\cF_i \otimes \cF_i')_i,
(\rho_{i,j} \otimes \rho_{i,j}')_{i,j}
)
\end{align}
and the duality functor 
\begin{align}
(-)^{-1}
\colon
\Mod M^\alpha
&\to
\Mod M^{\alpha^{-1}}, \\
(
(\cF_i)_i,
(\rho_{i,j})_{i,j}
)
&\mapsto
(
(\cHom_{\cO_{U_i}}(\cF_i, \cO_{U_i}))_i,
((\rho_{i,j}^{-1})^{*})_{i,j}
).
\end{align}

If $\cU$ consists of
a principal $G$-bundle $P$
on $M$ 
for some discrete group $G$,
then the isomorphism
\begin{align}
P \times G^p
\rightarrow
P\times_M P\times_M \cdots \times_M P, \quad
(y,g_1,\ldots,g_p) \mapsto (y,yg_1,yg_1g_2,\ldots,yg_1\cdots g_p)
\end{align}
induces an isomorphism
from the \v{C}ech complex
$C^\bullet(\cU,\cO_M^*)$
to the standard complex
$C^\bullet(G,\cO_P^*(P))$
for group cohomology.
The composite of the resulting map
$
H^2(G,\cO_P^*(P))
\rightarrow
H^2(M,\cO_M^*)
$
with the map
$
H^2(G,\bCx)
\rightarrow 
H^2(G,\cO_P^*(P))
$
will be denoted by
$
\iota_P
\colon
H^2(G,\bCx)
\rightarrow
H^2(M,\cO_M^*).
$
For any
$
\lambda \in H^2(G, \bCx),
$
an $\iota_P(\lambda)$-twisted sheaf 
will simply be called a $\lambda$-twisted sheaf.
If 
$G$
is a finite abelian group, then 
$\iota_P(\lambda)$
is a torsion element since any group cohomology of finite abelian group is torsion.
In other words, $\iota_P(\lambda)$
is an element of the \emph{cohomological Brauer group}
$\operatorname{Br}(M)\coloneqq H^2(M,\mathcal O_M^*)_{\mathrm{tors}}$. 
A $\lambda$-twisted sheaf consists of an $\cO_P$-module 
$\cF$ and
a \emph{$\lambda$-twisted $G$-linearization} of $\cF$,
i.e.,
a collection
$
(\rho_g)_{g \in G}
$
of morphisms 
$
\rho_g\colon\cF\rightarrow R_g^*\cF
$ 
satisfying 
$R_{g_1}^*\rho_{g_2}\circ\rho_{g_1}=\lambda(g_1,g_2)\rho_{g_1g_2}$.

A $\lambda$-twisted sheaf
$(\cF, (\rho_g)_{g \in G})$ will also be called
a \emph{$\lambda$-twisted $G$-equivariant 
$\cO_P$-module};
it reduces to a $G$-equivariant $\cO_P$-module
if $\lambda = 1$
(which in turn is equivalent to an $\cO_M$-module).

\section{Deformed Fourier--Mukai transforms} \label{sc:FM}

Let 
$\cQ$ 
be the Poincaré line bundle on 
$Y\times \hat{Y}.$
For a complex manifold $Z$,
a sheaf of associative algebras
$\ppl{Y}{Z}\cO_{\Xl}$
(resp.~$\ppl{Y}{Z}\cO_{\Xl}^\mathrm{op}$)
will denoted by 
$\cO_{\Xl\times Z}$ 
(resp.~$\cO_{\Xl^\mathrm{op}\times Z}$),
and the resulting ringed space will be denoted by
$\Xl\times Z$
(resp.~$\Xl^\mathrm{op}\times Z$).
Symbols
$Y\times \hat{X}^\lambda$
(resp.~$\Xl\times\hXl,\Xl^\mathrm{op}\times\hXl$)
denote
$(Y\times \hat{X})^{1\times\lambda}$
(resp.~$(\Xl\times \hat{X})^{1\times\lambda},(\Xl^\mathrm{op}\times \hat{X})^{1\times\lambda}$).

The \emph{deformed Poincaré line bundle}
$
\cPl
$
is an object of $\coh \Xl \times \hXli$
defined as the $\cO_{Y\times \hat{Y}}$-module
\begin{align}
  \bigoplus_{\hat{k}\in\hat{K}}
  \cQ\otimes \ppl{Y}{\hat{Y}}
  \cL_{\hat{k}}
  \cong \bigoplus_{\hat{k}\in\hat{K}}
  R_{\hat{k}}^*\cQ
\end{align}
equipped with the 
$\cO_{\Xl \times\hat{Y}}
\cong
\bigoplus_{\hat{k}\in\hat{K}}
\ppl{Y}{\hat{Y}}\cL_{\hat{k}}
$-action
\begin{align}
  \bigoplus_{\hat{k}\in\hat{K}}
  \cQ\otimes \ppl{Y}{\hat{Y}}\cL_{\hat{k}}
  \times
  \bigoplus_{\hat{k}\in\hat{K}}
  \ppl{Y}{\hat{Y}}\cL_{\hat{k}}
  \rightarrow&
  \bigoplus_{\hat{k}\in\hat{K}}
  \cQ\otimes \ppl{Y}{\hat{Y}}\cL_{\hat{k}}\\
  ((\psi_{\hat{k}}\otimes\phi_{\hat{k}})_{\hat{k}},
  (\phi'_{\hat{k}})_{\hat{k}})
  \mapsto &
  \left(\sum_{\hat{k}_1,\hat{k}_2\in\hat{K},\,\hat{k}_1\hat{k}_2=\hat{k}}
  \psi_{\hat{k}_1}\otimes(\phi_{\hat{k}_1}
  \star_{\lambda}
  \phi'_{\hat{k}_2})\right)_{\hat{k}},
\end{align}
and the
$\lambda^{-1}$-twisted 
$\hat{K}$-action (i.e. the $\lambda$-twisted \emph{left}$\hat{K}$-action)
\begin{align}
  \rho_{\hat{k}}\colon
  \bigoplus_{\hat{k}'\in\hat{K}}
  R_{\hat{k}'}^*\cQ
  \rightarrow& 
  R_{\hat{k}^{-1}}^*
  \bigoplus_{\hat{k}'\in\hat{K}}
  R_{\hat{k}'}^*\cQ\\
  (\phi_{\hat{k}'})_{\hat{k}'}\mapsto&
    (\lambda(\hat{k},\hat{k}'\hat{k}^{-1})
    \phi_{\hat{k}'\hat{k}^{-1}})_{\hat{k}'}.
\end{align}

The \emph{deformed Fourier--Mukai transform}
\begin{align}
\FMl
\colon
D^b(\hXl)
\to
D^b(\Xl)
\end{align}
is the integral functor
with the deformed Poincaré line bundle as the integral kernel,
i.e., the composite of the pull-back
\begin{align}
\qpl{Y}{\hat{Y}}
\colon
D^b(\hXl)
\to
D^b(Y \times \hXl),
\end{align}
the tensor product
\begin{align}
(-)\otimes \cPl
\colon
D^b(Y\times\hXl)
\to
D^b(\Xl \times \hat{X}),
\end{align}
and the push-forward
\begin{align}
\bR\pps{Y}{\hat{X}}
\colon
D^b(\Xl \times \hat{X})
\to
D^b(\Xl).
\end{align}
Its right adjoint is the integral functor
$\FM^{-1}_\lambda$
with the $g$-shift of 
\begin{align}\label{eq:inverse kernel}
\cPl^{-1}
\coloneqq 
\cHom_{\cO_{Y\times \hat{Y}}}
(\mathcal P_\lambda,\cO_{Y\times \hat{Y}})
\in\ob{\coh\Xl^{\mathrm{op}}\times \hXl}
\end{align}
as the kernel,
since
\begin{itemize}
\item
the push-forward
$
\bR(q^{Y,\hat{Y}})_*
$
is right adjoint
to the pull-back
$
\qpl{Y}{\hat{Y}},
$
\item
the tensor product
$
\cPl^{-1} \otimes (-)
$
is right adjoint to the tensor product
$
(-) \otimes \cPl,
$
and
\item
the pull-back
$
(p^{Y, \hat{X}})^*[g]
$
shifted by $g$
is right adjoint to the push-forward
$
\bR\pps{Y}{\hat{X}}
$
because $D^b(X_\lambda)$ is Calabi--Yau of dimension $g$ and $D^b(X_\lambda\times \hat{X})$ is Calabi--Yau of dimension $2g$ (it will be proved in Section \ref{sc:examples of group actions}).
\end{itemize}

Theorem \ref{th:main} below is the main result in this paper:

\begin{thm} \label{th:main}
The deformed Fourier--Mukai transform $\FMl$ 
is an equivalence of derived categories.
\end{thm}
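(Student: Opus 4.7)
The plan is to realize $\FMl$ as the equivariant lift of the classical Fourier--Mukai transform $\FM_\cQ \colon D^b(\hat{Y}) \simto D^b(Y)$ --- which is an equivalence by Theorem \ref{equivalence} applied to the ordinary complex torus $Y$ --- to a setting of $\lambda$-twisted $\hat{K}$-equivariant derived categories. The general machinery of equivariant Fourier--Mukai transforms from \cite{MR3039826}, combined with the formalism of group actions on categories to be developed in Section \ref{sc:group actions}, should then yield the stated equivalence.

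First I would reinterpret both sides in terms of twisted equivariant sheaves on the covers $Y$ and $\hat{Y}$. Using the description of $\cO_{\Xl}$ as $\bigoplus_{\hat{k}}\cL_{\hat{k}}$ endowed with the star product $\star_\lambda$, one identifies $\coh \Xl$ with the category $(\coh Y)^{\hat{K},\lambda}$ of $\lambda$-twisted $\hat{K}$-equivariant coherent sheaves on $Y$, where $\hat{K}$ acts on $\coh Y$ by $\hat{k}\cdot\cF\coloneqq\cF\otimes\cL_{\hat{k}}$. By the convention at the end of Section \ref{sc:gerby}, $\coh \hXl$ is identified with the category of $\lambda$-twisted $\hat{K}$-equivariant coherent sheaves on $\hat{Y}$ with respect to the translation action $R_{\hat{k}}$. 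The key intertwining property of $\FM_\cQ$ --- namely that pull-back along $R_{\hat{k}}$ on $\hat{Y}$ corresponds to tensoring with $\cL_{\hat{k}}$ on $Y$, implemented by the canonical isomorphism $R_{\hat{k}}^*\cQ\cong\cQ\otimes\ppl{Y}{\hat{Y}}\cL_{\hat{k}}$ --- then exhibits $\FM_\cQ$ as a $\hat{K}$-equivariant functor. By the twistings-by-cocycles formalism of Section \ref{sc:group actions}, this lifts to an equivalence between the $\lambda$-twisted equivariant derived categories identified above.

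It then remains to verify that the deformed Poincaré bundle $\cPl = \bigoplus_{\hat{k}}R_{\hat{k}}^*\cQ$ with its bi-equivariant structure recovers precisely this equivariant lift as an integral kernel, and this is where I expect the main technical difficulty to lie. The verification amounts to checking that the $\lambda$-twist governing the $\hat{K}$-action on $\coh Y$ (arising from the non-commutative star product defining $\cO_\Xl$) matches, under the intertwining by $\FM_\cQ$, the $\lambda$-twist governing the $\hat{K}$-action on $\coh \hat{Y}$ (defining $\hXl$ as a gerby deformation). This compatibility should reduce to a cocycle computation, a toy version of which is already visible in the explicit actions of the $\gamma_i$ and $\hat{\gamma}_i$ on the module $P_\lambda$ in Section \ref{sc:q-Weyl}. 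Once the compatibility is established, the theorem follows from the Mukai equivalence for $Y$ together with the general principle that equivariant lifts of equivalences are equivalences.
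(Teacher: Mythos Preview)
Your proposal is correct and follows essentially the same route as the paper: identify $\coh\Xl\simeq(\coh Y)^{\hat K,\lambda}$ and $\coh\hXl\simeq(\coh\hat Y)^{\hat K,\lambda}$, use the isomorphism $R_{\hat k}^*\cQ\cong\cQ\otimes\ppl{Y}{\hat Y}\cL_{\hat k}$ to make $\FMQa$ a $\hat K$-equivariant functor, and check that $(\FMQa)^{\hat K,\lambda}\cong\FMla$ so that $\FMl\cong\bR(\FMQa)^{\hat K,\lambda}$. The only minor deviation is in the last step: rather than invoking a blanket ``equivariant lifts of equivalences are equivalences'' principle, the paper observes that the lift of a fully faithful derived functor is fully faithful (via \eqref{eq:RHom and G-invariants}) and applies this separately to $\FMl$ and to its right adjoint $\FM_\lambda^{-1}$.
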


\begin{rem}
Let   
$\cO_{{\sfT_\lambda} \times \hat{\Gamma}}$
be the sheaf associative algebras 
$(\varpi\times\id)_*\cO_{T\times\hat{\Gamma}},$
with a non-commutative associative product defined by the formula similar to \eqref{eq:star product}, but 
$a_t$ and $b_t$ are functions on $\hat{\Gamma}.$
For
any $\lambda \in H^2(\hat{T},U(1))$
not necessarily at roots of unity,
it is natural to expect that
$\coh \hat{X}^\lambda$
is derived-equivalent to 
$\coh\sfX_\lambda$.
Although the latter is not known to be abelian,
we can define the deformed Poincaré line bundle 
$\sfP_\lambda$
as an object of
$
\Mod \left( \sfX_\lambda \times \hat{X}^{\lambda^{-1}} \right)
,$
i.e. a
$\cO_{{\sfT_\lambda} \times \hat{\Gamma}}$-module
equipped with a $\lambda$-twisted left $\hat{T}$-action and a right $\Gamma$-action.

$\sfP_\lambda$ is defined by a free 
$\cO_{{\sfT_\lambda} \times \hat{\Gamma}}$-module
of rank $1$ equipped with a $\lambda$-twisted left $\hat{T}$-action
\begin{align}
\hat{\gamma}\cdot\phi(x,\hat{x})
=\hat{\gamma}(x)\ast_\lambda\phi(x,\hat{x}\hat{\gamma})
\end{align}
and the right $\Gamma$-action
\begin{align}
\phi(x,\hat{x})\cdot \gamma
=\phi(x\gamma^{-1} ,\hat{x})\ast_\lambda\hat{x}(\gamma)^{-1}.
\end{align}
\end{rem}

\section{Finite group actions on abelian and derived categories} \label{sc:group actions}

It is natural to examine group actions on DG-categories in relation to group actions on derived categories and equivariant Fourier-Mukai transforms. 
However, coherent data for group actions on DG-categories are more intricate than those for group actions on abelian categories. 
As such, we will concentrate on group actions on abelian categories and the actions they induce on derived categories.

A \emph{weak action}
of a finite group $G$
on a category $\cC$
is a family
$
(g_*)_{g \in G}
$
of autoequivalences
$
g_* \colon \cC \to \cC
$
such that
the functor
$
(g_1)_* \circ (g_2)*
$
is isomorphic to $(g_1g_2)_*$
for any $g_1,g_2 \in G$.
An \emph{action}
is a weak action
equipped with a \emph{coherence data},
i.e.,
a family
$
(c_{g_1,g_2})_{g_1,g_2 \in G}
$
of isomorphisms
$
c_{g_1,g_2} \colon (g_1)_* \circ (g_2)_* \simto (g_1 g_2)_*
$ 
of functors
such that the diagram
\begin{align}
\begin{CD}
(g_1)_* \circ (g_2)_* \circ (g_3)_* 
@>{c_{g_1,g_2}}>> (g_1 g_2)_* \circ (g_3)_* \\
@V{c_{g_2,g_3}}VV @V{c_{g_1 g_2,g_3}}VV \\
(g_1)_* \circ (g_2 g_3)_* @>{c_{g_1,g_2 g_3}}>> (g_1 g_2 g_3)_*
\end{CD}
\end{align}
commutes for any $g_1, g_2, g_3 \in G$
(cf.~e.g.~\cite{MR1437497}).
An action is \emph{strict}
if the coherence data consists of identities.

Let $\cC$ be a category
equipped with an action of a finite group $G$.
The following definition is taken from \cite{MR3039826}:

\begin{dfn}[{\cite[Definition 3.1]{MR3039826}}]
A \emph{linearization}
of $A \in \ob{\cC}$
is a family
$
(\rho_g)_{g \in G}
$
of isomorphisms
$
\rho_g \colon A \simto g_*A
$
such that
the diagram
\begin{align}
  \begin{diagram}
    \node{A}\arrow{e,t}{\rho_{g_1}}
    \arrow{se,b}{\rho_{g_1g_2}}
    \node{(g_1)_*A}\arrow{s,r}
    {c_{g_1,g_2}\circ(g_1)_*(\rho_{g_2})}\\
    \node{}\node{(g_1g_2)_*A}
  \end{diagram}
\end{align}
commutes for any $g_1, g_2 \in G$.
An \emph{equivariant object}
is an object equipped with a linearization.
A \emph{morphism} of equivariant objects
from $(A,(\rho_g)_{g\in G})$
to $(A',(\rho'_g)_{g\in G})$
is a morphism
$
\phi \colon A\rightarrow A'
$
such that the diagram commute
\begin{align}
  \begin{CD}
    A@>{\phi}>>A'\\
    @V{\rho_g}VV @V{\rho'_g}VV\\
    g_*A@>{g_*\phi}>>g_*A'
  \end{CD}
\end{align}
commutes.
\end{dfn}

The category of $G$-equivariant objects in $\cC$
will be denoted by $\cC^G$.
For the rest of this paper and unless otherwise specified, we will assume that $\cC$ is a $\bC$-linear category and weak actions consists of $\bC$-linear functors.

\begin{prop}[{\cite[Proposition 3.2]{MR3039826}}] \label{pr:Sosna}
If $\cC$ is abelian,
then so is $\cC^G$.
\end{prop}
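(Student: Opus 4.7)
The plan is to use the forgetful functor $U \colon \cC^G \to \cC$ and the exactness of each autoequivalence $g_*$ to transfer the abelian structure of $\cC$ to $\cC^G$. The guiding principle is that every universal construction carried out on the underlying object lifts canonically to $\cC^G$, with the coherence data $(c_{g_1,g_2})$ ensuring that induced linearizations satisfy the cocycle condition.

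First, I would check that $\cC^G$ is additive. The zero object of $\cC$ carries a unique linearization and serves as a zero object of $\cC^G$. For two equivariant objects $(A,\rho)$ and $(A',\rho')$, the biproduct $A \oplus A'$ in $\cC$ endowed with the diagonal linearization $\rho \oplus \rho'$ is a biproduct in $\cC^G$. Hom-sets inherit an abelian group structure because equivariance is closed under addition.

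The main step is the existence of kernels and cokernels. Given a morphism $\phi \colon (A,\rho) \to (A',\rho')$ in $\cC^G$, let $\iota \colon K \to A$ be a kernel of $\phi$ in $\cC$. Since each $g_*$ is exact, $g_* \iota$ is a kernel of $g_* \phi$. The composite $\rho_g \circ \iota$ satisfies
\begin{align}
g_* \phi \circ \rho_g \circ \iota = \rho'_g \circ \phi \circ \iota = 0,
\end{align}
so it factors uniquely through $g_* \iota$, yielding a morphism $\widetilde\rho_g \colon K \to g_* K$. A parallel argument applied to $\rho_g^{-1}$ shows $\widetilde\rho_g$ is an isomorphism, and the cocycle identity
\begin{align}
c_{g_1,g_2} \circ (g_1)_*(\widetilde\rho_{g_2}) \circ \widetilde\rho_{g_1} = \widetilde\rho_{g_1 g_2}
\end{align}
follows from the uniqueness clause in the universal property of $(g_1 g_2)_*\iota$, since both sides become equal after postcomposition with $(g_1 g_2)_*\iota$ (using the cocycle for $\rho$ together with the defining relation for $c_{g_1,g_2}$). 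Cokernels are constructed dually.

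Finally, I would verify that every monic is a kernel and every epi is a cokernel. The forgetful functor $U$ reflects monomorphisms, epimorphisms, kernels, and cokernels, because each such property is a universal condition testable on underlying objects and because the linearization on any kernel or cokernel in $\cC^G$ is uniquely determined by that universal property. Consequently, the canonical factorization of a morphism in $\cC^G$ coincides with the one in $\cC$, and the first isomorphism theorem transfers directly. I do not foresee any serious obstruction; the one delicate point is the cocycle identity for the induced linearization, which is isolated to the universal-property argument outlined above.
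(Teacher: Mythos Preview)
The paper does not give its own proof of this proposition; it is quoted directly from \cite[Proposition 3.2]{MR3039826} and used as a black box. So there is nothing in the paper to compare your argument against.

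On its own merits your outline is correct and is essentially the standard argument. A couple of minor comments: in the last paragraph, the claim that $U$ \emph{reflects} monomorphisms and epimorphisms deserves one more sentence. Faithfulness of $U$ gives the easy direction (if $U\phi$ is monic then $\phi$ is), but what you actually need is the converse, and that follows from what you have already established: kernels and cokernels in $\cC^G$ are computed on underlying objects, so $\phi$ monic in $\cC^G$ forces its kernel---hence the underlying kernel in $\cC$---to vanish, whence $U\phi$ is monic. With that in hand, the normality axiom (every monic is a kernel, dually for epis) transfers exactly as you say, by lifting the canonical factorization from $\cC$.
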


We extend the above constructions
to twisted group actions.
Let $\phi$ be a second cocycle of $G$
with values in $\bCx$.

\begin{dfn}
A 
\emph{$\phi$-twisted linearization}
of $A \in \ob{\cC}$
is a family
$
(\rho_g)_{g \in G}
$
of isomorphisms 
$
\rho_g \colon A \simto g_*A
$
such that 
the diagram
\begin{align}
  \begin{diagram}
    \node{A}\arrow{e,t}{\rho_{g_1}}
    \arrow{se,b}{\phi(g_1,g_2)\rho_{g_1g_2}}
    \node{(g_1)_*A}\arrow{s,r}
    {c_{g_1,g_2}\circ(g_1)_*(\rho_{g_2})}\\
    \node{}\node{(g_1g_2)_*A}
  \end{diagram}
\end{align}
commutes for any $g_1, g_2 \in G$.
Morphisms of
$\phi$-twisted equivariant objects
are defined in the same way
as in $\cC^G$.
\end{dfn}
A 
$\phi$-twisted linearization of 
$A\in\ob{\cC}$
is equivalent to a linearization of 
$A$
for $G$-action $\{\rho_g\}_{g\in G}$ equipped with a coherence data 
$(\phi(g_1,g_2)^{-1}c_{g_1,g_2})_{g_1,g_2\in G}$. 
The category of 
$\phi$-twisted equivariant objects
will be denoted by
$\cC^{G,\phi}.$ 
The cocycle condition on $\phi$ ensures the equality of
\begin{align}
  \rho_{g_3}\circ\rho_{g_2}\circ\rho_{g_1}
  =\rho_{g_3}\circ(\phi(g_1,g_2)\rho_{g_1g_2})
  =\phi(g_1,g_2)\phi(g_1g_2,g_3)\rho_{g_1g_2g_3}
\end{align}  
and 
\begin{align}
  \rho_{g_3}\circ\rho_{g_2}\circ\rho_{g_1}
  =(\phi(g_2,g_3)\rho_{g_2g_3})\circ\rho_{g_1}
  =\phi(g_2,g_3)\phi(g_1,g_2g_3)\rho_{g_1g_2g_3},
\end{align}
where we have omitted
$(g_1)_*$
and so on.
If
a pair of cocycles 
$\phi$ 
and
$\phi'$
differ by the coboundary of 
$
\alpha \in C^1(G,\bCx),
$
then there exists an equivalence
$
\cC^{G,\phi}
\to
\cC^{G,\phi'}
$
sending
$
(A,(\rho_g)_{g\in G})
$
to
$
(A,(\alpha(g)\rho_g)_{g\in G}).
$
Explanations of relations between group cohomology of $G$ in low degrees and (weak) $G$-actions are found in \cite{beckmann2020equivariant}.

Corollary \ref{cr:twisted equivariant categories} below is obtained by applying Proposition \ref{pr:Sosna} to the $G$-action equipped with a coherence data $(\phi(g_1,g_2)^{-1}c_{g_1,g_2})_{g_1,g_2\in G}$:

\begin{cor} \label{cr:twisted equivariant categories}
If $\cC$ is abelian,
then so is $\cC^{G, \phi}$.
\end{cor}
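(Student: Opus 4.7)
The strategy is exactly the reduction flagged immediately before the statement: interpret a $\phi$-twisted linearization as an ordinary linearization for the same underlying weak action but for a modified coherence data, and then invoke Proposition~\ref{pr:Sosna} verbatim. So the plan splits into three small steps.

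First, starting from the given action $((g_*)_{g\in G},(c_{g_1,g_2})_{g_1,g_2\in G})$, I would form the twisted coherence data
\begin{align}
\tilde{c}_{g_1,g_2} \coloneqq \phi(g_1,g_2)^{-1}\, c_{g_1,g_2} \colon (g_1)_* \circ (g_2)_* \simto (g_1g_2)_*,
\end{align}
and verify that $((g_*)_{g\in G},(\tilde{c}_{g_1,g_2}))$ is again an action in the sense of the definition above, i.e.\ that the pentagon/associativity square for $\tilde{c}$ commutes. Unwinding the square reduces this to the original pentagon for $c$ together with the identity
\begin{align}
\phi(g_1,g_2)^{-1}\,\phi(g_1g_2,g_3)^{-1} = \phi(g_2,g_3)^{-1}\,\phi(g_1,g_2g_3)^{-1},
\end{align}
which is precisely the $2$-cocycle condition on $\phi$ with values in $\bCx$. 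Since $\phi(g_1,g_2)$ are central scalars, they commute past all functors, and this is the only nontrivial check.

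Second, I would observe that by the definition of $\cC^{G,\phi}$, the defining compatibility diagram for a $\phi$-twisted linearization $(\rho_g)_{g\in G}$ coincides, after absorbing $\phi(g_1,g_2)$ into $c_{g_1,g_2}$, with the defining compatibility diagram for an ordinary linearization with respect to $\tilde{c}$. This is the content of the remark preceding the corollary. Morphisms of $\phi$-twisted equivariant objects are defined by the same square as in $\cC^G$, independently of $\phi$, so one obtains an equality (not merely an equivalence) of categories
\begin{align}
\cC^{G,\phi} = \cC^{\widetilde{G}},
\end{align}
where $\widetilde{G}$ denotes the action with coherence data $\tilde{c}$.

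Finally, assuming $\cC$ is abelian, Proposition~\ref{pr:Sosna} applied to the action $\widetilde{G}$ yields that $\cC^{\widetilde{G}}$ is abelian, and by the previous identification so is $\cC^{G,\phi}$. There is essentially no obstacle here: the only thing to get right is the bookkeeping of the cocycle condition in the first step, and because $\phi$ takes values in the center $\bCx$ the verification is mechanical. No new argument about kernels, cokernels, or additive structure is needed, since all of that is imported through Proposition~\ref{pr:Sosna}.
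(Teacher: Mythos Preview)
Your proposal is correct and follows exactly the paper's approach: the paper simply states that the corollary is obtained by applying Proposition~\ref{pr:Sosna} to the $G$-action equipped with the modified coherence data $(\phi(g_1,g_2)^{-1}c_{g_1,g_2})_{g_1,g_2\in G}$, having already remarked that a $\phi$-twisted linearization is the same thing as an ordinary linearization for this twisted action. Your write-up just spells out the cocycle verification that the paper leaves implicit.
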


By applying the free-forgetful adjunction
\begin{align}
\free \dashv \fgt
\end{align}
between
\begin{align}
\free \colon
\cC
\to 
\cC^{G,\phi},
\qquad
A \mapsto
\left(
\bigoplus_{g' \in G} (g')_* A, \ 
\left( 
\rho_g \coloneqq \sum_{g' \in G} \phi(g,g')
\left(
\id\colon (gg')_*A\rightarrow g_*(g')_*A
\right)
\right)_{g \in G}
\right)
\end{align}
and
\begin{align}
\fgt
\colon
\cC^{G,\phi} \to \cC,
\qquad
(A,(\rho_g)_{g\in G}) \mapsto A
\end{align}
to the opposite categories
and using the equivalence
$
(\cC^{G,\phi})^\op \simeq (\cC^\op)^{G, \phi^{-1}},
$
one obtains an adjunction
\begin{align}
\fgt \dashv \free.
\end{align}

For any
$
(A,(\rho_g)_{g\in G}),
(A',(\rho'_g)_{g\in G})
\in
\cC^{G,\phi},
$ 
the space
$
\Hom_\cC(A,A')
$
comes with a natural linear action of $G$
in such a way that
the diagram
\begin{align}
  \begin{CD}
    A@>{\chi}>>A'\\
    @V{\rho_g}VV @V{\rho'_g}VV\\
    g_*A@>{g_*(\chi\cdot g)}>>g_*A'       
  \end{CD}
\end{align}
commutes
since
\begin{align}
\chi\cdot g_1\cdot g_2
&=((g_{1*})^{-1}(\rho'_{g_1}\circ\chi\circ \rho_{g_1}^{-1}))
\cdot g_2\\
&=(g_{2*})^{-1}
(\rho'_{g_2}\circ
((g_{1*})^{-1}
(\rho'_{g_1}\circ\chi\circ \rho_{g_1}^{-1}))
\circ \rho_{g_2}^{-1}))\\
&=(g_{2*})^{-1}(g_{1*})^{-1}
(g_{1*}\rho'_{g_2}
\circ(\rho'_{g_1}\circ\chi\circ \rho_{g_1}^{-1})
\circ g_{1*}\rho_{g_2}^{-1})\\
&=((g_1g_2)_*)^{-1}
((\phi(g_1,g_2)\rho'_{g_1g_2})
\circ\chi 
\circ (\phi(g_1,g_2)\rho_{g_1g_2})^{-1})\\
&=\chi\cdot g_1g_2.
\end{align}
It follows from the definition that
\begin{align}
\Hom_{\cC^G}
((A,(\rho_g)_{g\in G}),(A',(\rho'_g)_{g\in G}))
=
\Hom_\cC(A,A')^G.
\end{align}

A functor
$
\Phi \colon \cC \to \cC'
$
between categories with $G$-actions
is said to be
\emph{$G$-equivariant}
if it is equipped with a family
$
(a_g)_{g \in G}
$
of natural isomorphisms 
$
a_g \colon \Phi \circ g_* \simto g_* \circ \Phi
$
of functors
such that the diagram
\begin{align}\label{eq:pentagon}
\begin{diagram}
  \node{\Phi\circ (g_1)_*\circ (g_2)_*}
  \arrow{e,t}{a_{g_1}}\arrow{s,l}{c_{g_1,g_2}}
  \node{(g_1)_*\circ\Phi\circ (g_2)_*}
  \arrow{e,t}{a_{g_2}}
  \node{(g_1)_*\circ (g_2)_*\circ\Phi}
  \arrow{sw,b}{c_{g_1,g_2}}\\
  \node{\Phi\circ(g_1g_2)_*}
  \arrow{e,t}{a_{g_1g_2}}
  \node{(g_1g_2)_*\circ\Phi}
\end{diagram}  
\end{align}
commutes.
A $G$-equivariant functor
$
\Phi \colon \cC \to \cC'
$
induces a functor 
$
\Phi^{G,\phi} \colon
\cC^{G,\Phi} \to \cC'^{G,\phi}
$
sending an object 
$
(A,(\rho_g)_{g\in G})
$
to
$
(\Phi(A),(a_g\circ\Phi(\rho_g))_{g\in G})
$
and a morphism 
$
f \colon (A,(\rho_g)_{g\in G})
\rightarrow (A',(\rho'_g)_{g\in G})
$
to 
$
\Phi(f)\colon \Phi(A)\rightarrow \Phi(A').
$
It is straightforward to show that
$\Phi^{G,\phi}$ send $G$-equivariant objects to $G$-equivariant objects.

\begin{prop}
If $\Phi$ is right (resp.~left) exact,
then so is $\Phi^{G,\phi}$.
\end{prop}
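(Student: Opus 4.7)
The plan is to reduce the exactness of $\Phi^{G,\phi}$ to the exactness of $\Phi$ via the forgetful functor $\fgt \colon \cC^{G,\phi} \to \cC$. The key observation is that $\Phi^{G,\phi}$ is compatible with forgetful functors in the sense that there is a natural isomorphism
\begin{align}
\fgt_{\cC'} \circ \Phi^{G,\phi} \simeq \Phi \circ \fgt_\cC,
\end{align}
which is immediate from the definition of $\Phi^{G,\phi}$: on objects it sends $(A,(\rho_g))$ to $(\Phi(A),(a_g \circ \Phi(\rho_g)))$, whose underlying object of $\cC'$ is $\Phi(A) = \Phi(\fgt_\cC(A,(\rho_g)))$.

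Next, I would verify that the forgetful functor $\fgt$ is exact and reflects exactness. Exactness follows from the fact that it admits both a left and a right adjoint, namely $\free$ in both directions, as spelled out in the text just before the proposition. Alternatively, one can inspect the proof of Proposition \ref{pr:Sosna} (and hence of Corollary \ref{cr:twisted equivariant categories}): kernels and cokernels in $\cC^{G,\phi}$ are constructed by taking the underlying kernel or cokernel in $\cC$ and equipping it with the induced $\phi$-twisted linearization. Hence a sequence in $\cC^{G,\phi}$ is exact if and only if its image under $\fgt$ is exact in $\cC$, so $\fgt$ both preserves and reflects exactness.

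Now assume $\Phi$ is right exact, and pick an exact sequence $A\to B\to C\to 0$ in $\cC^{G,\phi}$. Applying $\fgt_\cC$ gives an exact sequence in $\cC$; applying the right exact functor $\Phi$ gives an exact sequence $\Phi(\fgt_\cC(A))\to \Phi(\fgt_\cC(B))\to \Phi(\fgt_\cC(C))\to 0$ in $\cC'$; by the compatibility above this sequence is the image under $\fgt_{\cC'}$ of $\Phi^{G,\phi}(A)\to \Phi^{G,\phi}(B)\to \Phi^{G,\phi}(C)\to 0$. Since $\fgt_{\cC'}$ reflects exactness, the latter sequence is exact, which proves right exactness of $\Phi^{G,\phi}$. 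The left exact case is entirely analogous, replacing cokernels by kernels.

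There is no serious obstacle here; the only thing one has to be a little careful about is confirming that the abelian structure on $\cC^{G,\phi}$ given by Corollary \ref{cr:twisted equivariant categories} really is the one in which $\fgt$ is exact and conservative on exactness, which, as noted, is immediate from the explicit construction of kernels and cokernels inherited from $\cC$.
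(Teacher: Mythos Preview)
Your proof is correct and follows essentially the same approach as the paper: both use that $\fgt$ is exact (via the $\free \dashv \fgt \dashv \free$ adjunctions) and that a sequence in $\cC^{G,\phi}$ is exact if and only if its image under $\fgt$ is exact in $\cC$. You are more explicit than the paper about the compatibility $\fgt_{\cC'}\circ \Phi^{G,\phi} \simeq \Phi\circ \fgt_\cC$ and about why $\fgt$ reflects exactness, but the underlying argument is the same.
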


\begin{proof}
Since $\free$ and $\fgt$ are mutually
both left and right adjoint to each other,
they are exact,
so that a sequence
$A\rightarrow B\rightarrow C$
in $\cC^{G,\phi}$ is exact
if and only if
$\fgt(A)\rightarrow \fgt(B)\rightarrow \fgt(C)$ is exact in $\cC$.
\end{proof}

Now we discuss the derived category of 
$\cC^{G,\phi}$.

\begin{prop}
An object
$(I,(\rho_g)_{g\in G})\in\ob{\tcg}$
is injective
if and only if 
so is $I \in \ob{\cC}$.
The category
$
\cC^{G,\phi}
$
has enough injectives if and only if 
so is $\cC$.
\end{prop}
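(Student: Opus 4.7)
The plan is to exploit the biadjointness $\free \dashv \fgt \dashv \free$ together with the fact that $|G|$ is invertible in $\bC$. Both $\free$ and $\fgt$ are exact, being both left and right adjoints to each other, and in particular $\free$ preserves injectives as the right adjoint of the exact functor $\fgt$.

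For the forward direction of the first statement, assume $(I, \rho)$ is injective in $\cC^{G,\phi}$. Given a monomorphism $A \hookrightarrow B$ in $\cC$ and a morphism $f \colon A \to I$, I apply the exact functor $\free$ to obtain a monomorphism $\free(A) \hookrightarrow \free(B)$, transport $f$ under the adjunction bijection $\Hom_\cC(A, I) \cong \Hom_{\cC^{G,\phi}}(\free(A), (I, \rho))$ coming from $\free \dashv \fgt$, extend using the injectivity of $(I, \rho)$, and transport back to obtain the desired extension $B \to I$.

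For the reverse direction, the key step is to exhibit $(I, \rho)$ as a direct summand of $\free(I) = \free(\fgt(I, \rho))$ inside $\cC^{G,\phi}$. Unfolding the two adjunctions, the unit $\eta' \colon (I, \rho) \to \free(I)$ of $\fgt \dashv \free$ has $g$-th component $\rho_g \colon I \to g_* I$ (up to a scalar involving $\phi(g, e)$), and the counit $\epsilon \colon \free(I) \to (I, \rho)$ of $\free \dashv \fgt$ has $g$-th component $\rho_g^{-1}$ (up to the inverse scalar). These scalars cancel in the composition, giving $\epsilon \circ \eta' = \sum_{g \in G} \rho_g^{-1} \circ \rho_g = |G| \cdot \id_{(I, \rho)}$, so $\frac{1}{|G|}\epsilon$ splits $\eta'$. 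Since $\free$ preserves injectives, $\free(I)$ is injective whenever $I$ is, and direct summands of injectives are injective, so $(I, \rho)$ is injective.

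For the second statement, if $\cC$ has enough injectives, then given $(M, \tau) \in \cC^{G,\phi}$ I pick a monomorphism $M \hookrightarrow I$ with $I$ injective in $\cC$, apply the exact $\free$ to obtain $\free(M) \hookrightarrow \free(I)$, and precompose with $\eta' \colon (M, \tau) \hookrightarrow \free(M)$ (a monomorphism because its $e$-th component is $\id_M$), embedding $(M, \tau)$ into the injective $\free(I)$. Conversely, given $A \in \cC$, I embed $\free(A) \hookrightarrow (I', \rho')$ into an injective of $\cC^{G,\phi}$; the first part of the proposition ensures $I'$ is injective in $\cC$, and composing with the inclusion $A \hookrightarrow \fgt \free(A)$ of the $e$-summand produces $A \hookrightarrow I'$. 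The main obstacle is verifying the clean identity $\epsilon \circ \eta' = |G| \cdot \id$ in the twisted setting: naive averaging of a non-equivariant extension fails when $\phi \neq 1$, but the adjunction units and counits automatically carry the correct $\phi$-corrections so that the twist cancels in the composition; the argument also uses essentially that $|G|$ is invertible in the ground field $\bC$.
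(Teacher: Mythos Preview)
Your argument is correct and lands at the same place as the paper, but the reverse implication of the first statement is handled by a genuinely different mechanism. The paper argues directly via the identity
\[
\Hom_{\cC^{G,\phi}}\bigl(A,(I,\rho)\bigr)=\Hom_{\cC}\bigl(\fgt(A),I\bigr)^{G},
\]
observing that this is a composite of three exact functors ($\fgt$, $\Hom_\cC(-,I)$, and $(-)^G$, the last being exact since $|G|\in\bC^\times$). You instead realise $(I,\rho)$ as a retract of $\free(I)$ using the unit of $\fgt\dashv\free$ and the counit of $\free\dashv\fgt$, whose composite is $|G|\cdot\id$, and then invoke that $\free$ preserves injectives. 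Both routes spend the invertibility of $|G|$ at the crucial step; the paper's is shorter and sidesteps the bookkeeping of the twisted unit/counit (which you correctly flag but do not fully write out), while your retract argument has the advantage of making the separable--Frobenius structure of the biadjunction explicit and would generalise verbatim to any ambidextrous adjunction with invertible ``trace''. For the forward implication and for both directions of the enough--injectives statement, your argument and the paper's are essentially identical, phrased through the same adjunctions.
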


\begin{proof}
If $I$ is injective,
then the functor
\begin{align}
  A
  \mapsto
  \Hom_{\cC^{G,\phi}}(A,(I,(\rho_g)))
  =\Hom_\cC(\fgt(A),I)^G
\end{align}
is exact, 
since $\fgt$ is exact,
$I$ is injective,
and taking the $G$-invariant part is exact.

Conversely,
if $(I,(\rho_g))$ is injective,
then the functor
\begin{align}
  A
  \mapsto
  \Hom_{\cC}(A,I)
  \cong \Hom_{\cC^{G,\phi}}
  (\free(A),(I,(\rho_g)))
\end{align}
is exact.

Let $(A,(\rho_g)_g)$ be an object in $\cC^{G,\phi}$. 
If $\cC$ has enough injectives,
then a monomorphism 
$A\rightarrow I$ 
into an injective object $I \in \ob{\cC}$
gives a monomorphism 
$
A \rightarrow \free I
$
into $\free I$,
which is injective in $\cC^{G,\phi}$. 

Conversely,
if 
$\tcg$ 
has enough injectives,
then
for any
$
A \in \ob{\cC},
$
a monomorphism 
$
\free A \rightarrow I
$
into an injective
$
I\in\ob{\tcg}
$
gives a monomorphism 
$
A \rightarrow \fgt I
$
into an injective $\fgt I$.
\end{proof}

Assume that
$\cC$ 
has enough injectives.
For any pair
$
A, B \in \ob{D^+(\tcg)}
$
of objects,
the space
$\Hom(\fgt(A),\fgt(B))$
has a natural $G$-action
in such a way that
\begin{align} \label{eq:RHom and G-invariants}
\Hom(A,B) \cong \Hom(\fgt(A),\fgt(B))^G.
\end{align}
 
\begin{prop}\label{pr:Calabi--Yau}
  If $D^b(\cC)$ is Calabi--Yau of dimension $n$, then so is 
  $D^b(\cC^{G,\phi})$.
\end{prop}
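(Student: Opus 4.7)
The plan is to reduce the Calabi--Yau property of $D^b(\cC^{G,\phi})$ to that of $D^b(\cC)$ via the isomorphism \eqref{eq:RHom and G-invariants}. Recall that $D^b(\cC)$ being Calabi--Yau of dimension $n$ means that $[n]$ is a Serre functor, equivalently that there is a bifunctorial isomorphism $\eta_{X,Y}\colon \Hom(X,Y)\simto \Hom(Y,X[n])^*$ for $X,Y\in\ob{D^b(\cC)}$. I will establish the analogous statement for $A,B\in\ob{D^b(\cC^{G,\phi})}$.

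First, apply \eqref{eq:RHom and G-invariants} to both $\Hom_{\cC^{G,\phi}}(A,B)$ and $\Hom_{\cC^{G,\phi}}(B,A[n])$ to rewrite each as the $G$-invariants of a Hom space in $D^b(\cC)$. The key step is to show that
\begin{align*}
\eta_{\fgt A,\fgt B}\colon \Hom_\cC(\fgt A,\fgt B)\simto \Hom_\cC(\fgt B,\fgt A[n])^*
\end{align*}
is $G$-equivariant for the $G$-actions of Section~\ref{sc:group actions}. Since each $g_*$ is an autoequivalence commuting with the shift $[n]$, the composite $g_*\circ [n]\circ g_*^{-1}$ is again a Serre functor on $D^b(\cC)$, and uniqueness of Serre functors up to natural isomorphism forces $\eta$ to intertwine the $G$-actions. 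Note that the cocycle $\phi$ does not appear explicitly in the action formula $\chi\cdot g=(g_*)^{-1}(\rho'_g\circ\chi\circ\rho_g^{-1})$, since the $\phi$-factors coming from the linearizations of $A$ and of $B$ cancel, so no further compatibility with $\phi$ needs to be checked.

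Finally, I take $G$-invariants on both sides. For a finite group $G$ acting $\bC$-linearly on a finite-dimensional vector space $V$, the averaging idempotent $\tfrac{1}{\sharp G}\sum_{g\in G}g$ induces a natural isomorphism $(V^*)^G\cong (V^G)^*$. Combining the steps then produces
\begin{align*}
\Hom_{\cC^{G,\phi}}(A,B)
\cong \Hom_\cC(\fgt A,\fgt B)^G
\cong \bigl(\Hom_\cC(\fgt B,\fgt A[n])^G\bigr)^*
\cong \Hom_{\cC^{G,\phi}}(B,A[n])^*,
\end{align*}
which shows that $[n]$ is a Serre functor on $D^b(\cC^{G,\phi})$. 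The main obstacle is the $G$-equivariance of $\eta$: the uniqueness-of-Serre-functors argument is conceptually clean, but a hands-on verification would proceed by fixing a trace $\mathrm{tr}\colon\Hom(X,X[n])\to\bC$ representing the Calabi--Yau structure and checking that it is invariant under the autoequivalences $g_*$, which in turn follows from the naturality of the trace with respect to equivalences.
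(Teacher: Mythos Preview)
Your proof is correct and follows essentially the same route as the paper: start from the Calabi--Yau isomorphism on $D^b(\cC)$, argue that it is $G$-equivariant (the paper simply says ``by the naturality'', while you phrase it via uniqueness of Serre functors and the cancellation of the $\phi$-factors), and then pass to $G$-invariants using \eqref{eq:RHom and G-invariants}. Your write-up is more explicit about the step $(V^*)^G\cong (V^G)^*$ and about why $\phi$ plays no role, but the underlying argument is the same.
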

\begin{proof}
  There exists a natural isomorphism
  \begin{align}
    \Hom_{D^b(\cC)}(A,B)\cong\Hom_{D^b(\cC)}(B,A[n])^*
  \end{align} 
  for $A,B\in\ob{D^b(\cC^{G,\phi})}$, since $D^b(\cC)$ is Calabi--Yau of dimension $n$.
  It is $G$-equivariant by the naturality, so it induces an  isomorphism on $G$-invariant part.
  This means that $D^b(\cC^{G,\phi})$ is also Calabi--Yau of dimension $n$ by \eqref{eq:RHom and G-invariants}.
\end{proof}
A $G$-equivariant left exact functor
$
\Phi \colon \cC \to \cC'
$
induces functors
$
\bR \Phi \colon D^+(\cC)
\rightarrow D^+(\cC')
$
and
$
\bR \Phi^{G,\phi} \colon D^+(\tcg)
\rightarrow D^+(\cC'^{G,\phi}).
$
If
$
\bR \Phi
$
is fully faithful,
then
so is
$
\bR \Phi^{G,\phi}
$
by \eqref{eq:RHom and G-invariants}.

\section{Group actions on coherent sheaves} \label{sc:examples of group actions}

An action of a finite group $G$
on a complex manifold $Z$
induces a strict $G$-action
$(R_g^*)_{g \in G}$
on $\coh Z$.
In the case of the $\hat{K}$-action on $\hat{Y}$,
one obtains
$(\coh \hat{Y})^{\hat{K}, \lambda} \simeq \coh \hXl$.

Another example of a finite group action
on the category of coherent sheaves
comes from
a \emph{coherent} injection
from a finite abelian group $G$
to the Picard group $\Pic^0 Z$
of a complex manifold $Z$,
i.e.,
a family $\{ \cL_g \}_{g \in G}$ of line bundles
and a family $(m_{g_1, g_2})_{g_1,g_2 \in G}$
of isomorphisms
$
m_{g_1,g_2}
\colon
\cL_{g_1} \otimes \cL_{g_2}
\simto
\cL_{g_1g_2}
$
such that the diagrams
\begin{align}
\begin{CD}
  \cL_{g_1}\otimes \cL_{g_2}\otimes \cL_{g_3}
  @>{m_{g_1,g_2} \otimes \id}>> \cL_{g_1g_2}\otimes \cL_{g_3}\\
  @V{\id \otimes m_{g_2,g_3}}VV @V{m_{g_1g_2,g_3}}VV\\
  \cL_{g_1}\otimes \cL_{g_2g_3}@>{m_{g_1,g_2g_3}}>>\cL_{g_1g_2g_3}
\end{CD}
\end{align}
\begin{align}
\begin{diagram}\label{eq:symmetric}
  \node{\cL_{g_1}\otimes\cL_{g_2}}
  \arrow{e,t}{m_{g_1,g_2}}\arrow{s}
  \node{\cL_{g_1 g_2}}\\
  \node{\cL_{g_2}\otimes\cL_{g_1}}
  \arrow{ne,b}{m_{g_2,g_1}}
\end{diagram}
\end{align}
commutes,
inducing a $G$-action
$
\left(
(-) \otimes \cL_g^{-1})
\right)_{g \in G}
$
on
$
\coh Z.
$
Here,
the vertical arrow in \eqref{eq:symmetric} comes from the canonical symmetric monoidal structure in $\coh Z.$
If $Z$ is compact and connected,
then one has $\operatorname{Aut} \cL = \bCx$
for any line bundle $\cL$,
and an example of a coherence data 
$(m_{g_1,g_2})_{g_1,g_2 \in G}$
comes from a choice of a collection
$(\varphi_g)_{g \in G}$
of linear isomorphisms
$
\varphi \colon (\cL_g)_z \simto \bC
$
from the fibers $(\cL_g)_z$ of $\cL_g$
at an arbitrarily chosen and fixed base point $z \in Z$
to the complex line.

A $\phi$-twisted $G$-linearization
$(\rho_g)_{g \in G}$
of $\cM$
is equivalent to a family
$
(m_g)_{g \in G}
$
of morphisms
\begin{align}
m_g=(g_*)^{-1}\rho_g\colon
\cM\otimes \cL_g\rightarrow \cM
\end{align}
such that diagram
\begin{align}
\begin{diagram}
  \node{\cM\otimes \cL_{g_1}\otimes \cL_{g_2}}
  \arrow{e,t}{m_{g_1}}
  \arrow{se,b}{\phi(g_1,g_2)m_{g_1g_2}}
  \node{\cM\otimes \cL_{g_2}}\arrow{s,r}{m_{g_2}}\\
  \node{}\node{\cM}
\end{diagram}
\end{align}
commutes.
The category 
$(\coh Z)^{G,\phi}$
is equivalent to
$\coh \cA_\phi$,
where
$\cA_\phi$
is the sheaf of $\cO_Z$-algebras
obtained as the $\cO_Z$-module
$
\bigoplus_{g\in G} \cL_g
$
equipped with the multiplication
given by
$
\sum_{g_1,g_2\in G} \phi(g_1,g_2) m_{g_1,g_2}.
$
If 
$\phi=1$,
$\cA_\phi$
is commutative by the commutativity of \eqref{eq:symmetric}.
In particular, 
$\coh \Xl$
is equivalent to 
$
(\coh Y)^{\hat{K},\lambda}.
$
By Proposition \ref{pr:Calabi--Yau}, $D^b(X_\lambda)$ is Calabi--Yau of dimension $g$ and $D^b(X_\lambda\times\hat{X})$ is Calabi--Yau of dimension $2g$, which were needed to prove that $\FM_\lambda^{-1}$ is right adjoint to $\FM_\lambda$.

\section{Proof of Theorem \ref{th:main}} \label{sc:proof of the main thoerem}

Functors
$\FMl$
and
$\FMQ$
are the right derived functors of functors
\begin{align}
\FMla \colon \coh \hXl &\to \coh\Xl \\
\cM
&\mapsto
\pps{Y}{\hat{X}}(\qpl{Y}{\hat{Y}}\cM
  \otimes_{\cO_{Y\times\hat{Y}}}
  \cPl)
\end{align} 
and
\begin{align}
\FMQa \colon \coh \hat{Y} &\to \coh Y \\
\cM &\mapsto
\pps{Y}{\hat{Y}}(\qpl{Y}{\hat{Y}}\cM
  \otimes_{\cO_{Y\times\hat{Y}}}
  \cQ)
\end{align}
of abelian categories.

Since
\begin{align}
\FMQa
(R_{\hat{y}}^*\cM)
&= \pps{Y}{\hat{Y}}
(\qpl{Y}{\hat{Y}}R_{\hat{y}}^*\cM
\otimes\cQ)\\
&\cong \pps{Y}{\hat{Y}}
R_{(1,\hat{y}^{-1})}^*
(\qpl{Y}{\hat{Y}}R_{\hat{y}}^*\cM
\otimes\cQ)\\
&\cong \pps{Y}{\hat{Y}}
(\qpl{Y}{\hat{Y}}\cM\otimes
R_{(1,\hat{y}^{-1})}^*\cQ)\\
&\cong \pps{Y}{\hat{Y}}
(\qpl{Y}{\hat{Y}}\cM\otimes
\cQ\otimes\ppl{Y}{\hat{Y}}\cL_{\hat{y}}^{-1})\\
&\cong
\pps{Y}{\hat{Y}}(\qpl{Y}{\hat{Y}}\cM\otimes\cQ)
\otimes\cL_{\hat{y}}^{-1}\\
&= \operatorname{FM}^{\mathrm{ab}}_\cQ
(\cM)\otimes\cL_{\hat{y}}^{-1},
\end{align}
for any $\hat{y}\in\hat{K}$ and $\cM\in\ob{\coh\hat{Y}},$
$\FMQa$ commutes
with the weak $\hat{K}$-action.
The commutativity of the diagram \eqref{eq:pentagon} in this case is a straightforward diagram chasing.
This turns $\FMQa$ into a $\hat{K}$-equivariant functor,
inducing a functor
\begin{align}
(\FMQa)^{\hat{K},\lambda}
\colon
\coh \hXl
\to 
\coh \Xl.
\end{align}
\begin{lem}
  The functor 
  $(\FMQa)^{\hat{K},\lambda}$
  is isomorphic to 
  $\FMla.$
\end{lem}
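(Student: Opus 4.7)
The plan is to compare both functors after passing through the equivalence $\coh \Xl \simeq (\coh Y)^{\hat{K},\lambda}$ from Section \ref{sc:examples of group actions}. This equivalence preserves underlying $\cO_Y$-modules, so it suffices to exhibit a natural isomorphism of underlying $\cO_Y$-modules $\FMla(\cM) \cong \FMQa(\cM_0)$ that intertwines the $\lambda$-twisted $\hat{K}$-linearization coming from the $\cO_{\Xl}$-structure on $\FMla(\cM)$ with the linearization of $(\FMQa)^{\hat{K},\lambda}(\cM)$, namely the composition $a_{\hat{k}} \circ \FMQa(\rho_{\hat{k}})$, where $a_{\hat{k}}$ is the $\hat{K}$-equivariance datum of $\FMQa$ constructed just before the lemma and $\rho_{\hat{k}}$ is the $\lambda$-twisted linearization of $\cM$. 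Here $\cM_0 \in \coh \hat{Y}$ denotes the underlying $\cO_{\hat{Y}}$-module of $\cM \in \coh \hXl$.

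For the underlying $\cO_Y$-modules, I would use the decomposition
\[
\cPl \cong \bigoplus_{\hat{k}\in\hat{K}} R_{\hat{k}}^* \cQ \cong \bigoplus_{\hat{k}\in\hat{K}} \cQ \otimes \ppl{Y}{\hat{Y}}\cL_{\hat{k}}
\]
and apply the projection formula to obtain
\[
\pps{Y}{\hat{Y}}\left(\qpl{Y}{\hat{Y}}\cM_0 \otimes \cPl\right) \cong \bigoplus_{\hat{k}\in\hat{K}} \FMQa(\cM_0) \otimes \cL_{\hat{k}}.
\]
Since $\pps{Y}{\hat{X}}$ amounts to taking $\hat{K}$-invariants of $\pps{Y}{\hat{Y}}$, and the $\hat{K}$-action on the right-hand side comes from combining the $\lambda$-twisted linearization on $\cM$ with the $\lambda^{-1}$-twisted linearization on $\cPl$, the natural isomorphisms $\FMQa(R_{\hat{k}}^*\cM_0) \cong \FMQa(\cM_0) \otimes \cL_{\hat{k}}^{-1}$ witnessing the $\hat{K}$-equivariance of $\FMQa$ should identify the $\hat{K}$-invariant subsheaf with $\FMQa(\cM_0)$.

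The main obstacle will be the final step: verifying that the residual $\cO_{\Xl}$-action on $\FMla(\cM)$, which comes from the $\cO_{\Xl}$-action on $\cPl$ via the star product, corresponds under the identification above to the $\lambda$-twisted linearization $a_{\hat{k}} \circ \FMQa(\rho_{\hat{k}})$ of $(\FMQa)^{\hat{K},\lambda}(\cM)$. This reduces to careful bookkeeping of the three $\lambda$-scalars involved — the scalar in the star-product definition of the $\cO_{\Xl}$-action on $\cPl$, the scalar in the $\lambda^{-1}$-twisted left $\hat{K}$-action on $\cPl$, and the scalar in $\rho$ — which must combine after pushforward to give exactly the $\lambda$-scalar defining $a_{\hat{k}} \circ \FMQa(\rho_{\hat{k}})$. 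All other identifications (the projection formula, translation invariance of $\pps{Y}{\hat{Y}}$, the defining property $R_{\hat{k}}^* \cQ \cong \cQ \otimes \ppl{Y}{\hat{Y}} \cL_{\hat{k}}$ of the Poincaré bundle, and naturality of $a_{\hat{k}}$) are standard.
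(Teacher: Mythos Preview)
Your plan is sound and would yield a correct proof, but it takes a more hands-on route than the paper. You propose to compute $\FMla(\cM)$ directly by decomposing $\cPl$ into its $\hat K$-indexed summands, pushing forward all the way to $Y$, and then identifying the $\hat K$-invariant part with $\FMQa(\cM_0)$; the compatibility of linearizations is deferred to a final scalar bookkeeping step which you correctly flag as the crux.

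The paper instead factors both functors into the same three stages
\[
(q^{Y,\hat Y})^*,\qquad \text{(tensor with kernel)},\qquad (p^{Y,\bullet})_*
\]
and checks that each of the three resulting squares commutes after applying $(-)^{\hat K,\lambda}$. The two outer squares commute by definition of the vertical equivalences, so the entire content of the lemma is localized in the middle square, where the explicit natural isomorphism
\[
\bigoplus_{\hat y'\in\hat K}\rho_{\hat y'}^{-1}\otimes\id\colon\bigoplus_{\hat y'}R_{(1,\hat y')}^*(\cM\otimes\cQ)\longrightarrow\bigoplus_{\hat y'}\cM\otimes R_{(1,\hat y')}^*\cQ
\]
does the job; its $\hat K$-equivariance is the only nontrivial check and is a short direct computation. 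What this buys over your approach is that the three $\lambda$-scalars you list never have to be tracked simultaneously: the structural decomposition isolates the single place where the linearization $\rho$ on $\cM$ interacts with the summand decomposition of $\cPl$, and the remaining scalars are absorbed into the definitional commutativity of the outer squares. Your approach, by contrast, is more self-contained and closer to an explicit formula, at the cost of the combinatorics you anticipate at the end.
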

\begin{proof}
The squares on the left and the right of the diagram 
\[
  \xymatrix@=54pt{
    (\operatorname{coh}\hat{Y})^{\hat{K},\lambda}
    \ar[r]^-{(\qpl{Y}{\hat{Y}})^{\hat{K},\lambda}}\ar[d]
    &(\operatorname{coh}(Y\times\hat{Y}))^{\hat{K},\lambda}
    \ar[r]^-{(\pi_*(-\otimes\mathcal Q))^{\hat{K},\lambda}}\ar[d]
    &(\operatorname{coh}(Y\times\hat{X}))^{\hat{K},\lambda}
    \ar[r]^-{(\pps{Y}{\hat{X}})^{\hat{K},\lambda}}\ar[d]
    &(\operatorname{coh}Y)^{\hat{K},\lambda}
    \ar[d]\\
    \operatorname{coh}\hXl
    \ar[r]^-{\qpl{Y}{\hat{Y}}}
    &\operatorname{coh}(Y\times\hXl)
    \ar[r]^-{-\otimes\cPl}
    &\operatorname{coh}(\Xl\times\hat{X})
    \ar[r]^-{\pps{Y}{\hat{X}}}
    &\operatorname{coh}\hXl
  }
\]
commute by definition, and the natural isomorphism
\begin{align}
  \displaystyle\oplus_{\hat{y}'\in\hat{K}}
  \rho_{\hat{y}'}^{-1}\otimes\operatorname{id}
  \colon
  \displaystyle\bigoplus_{\hat{y}'\in\hat{K}}
  R_{(1,\hat{y}')}^*
  (\mathcal M\otimes_{\mathcal O_{Y\times\hat{Y}}}\mathcal Q)
  \rightarrow
  \displaystyle\bigoplus_{\hat{y}'\in\hat{K}}    
  \mathcal M\otimes_{\mathcal O_{Y\times\hat{Y}}}
  R_{(1,\hat{y}')}^*\mathcal Q,
\end{align}
whose $\hat{K}$-equivariance can be checked
by a straightforward computation,
gives the commutativity of the square in the middle.
\end{proof}
Therefore
$\bR(\FMQa)^{\hat{K}, \lambda}$
and 
$\FMl$
are isomorphic.
Hence
$\FMl$
is fully faithful as explained at the end of Section \ref{sc:group actions}.
Similarly, the right adjoint 
$\FM^{-1}_\lambda$
of
$\FMl,$ whose kernel is given by the $g$-shift of \eqref{eq:inverse kernel}, is also fully faithful,
and
Theorem \ref{th:main} 
is proved.

\bibliographystyle{amsalpha}
\bibliography{my_reference_papers}

\end{document}